\title{A new integral equation for Brownian stopping problems with finite time horizon}
\date{\today}
\author{S\"oren Christensen\thanks{Mathematisches Seminar, Christian-Albrechts-Universit\"at zu Kiel, Heinrich-Hecht-Platz 6, 24118 Kiel, Germany, E-mail:  christensen@math.uni-kiel.de}, Simon Fischer\thanks{Mathematisches Seminar, Christian-Albrechts-Universit\"at zu Kiel, Heinrich-Hecht-Platz 6, 24118 Kiel, Germany, E-mail:  fischer@math.uni-kiel.de}}
\begin{document}

\maketitle

\begin{abstract}
For classical finite time horizon stopping problems driven by a Brownian motion
\begin{equation*}
    V(t,x) = \sup_{t\leq\tau\leq0}\e_{(t,x)}[g(\tau,W_{\tau})],
\end{equation*}
we derive a new class of Fredholm type integral equations for the stopping set. 
For a large class of discounted problems, we show by analytical arguments that the equation uniquely characterizes the stopping boundary of the problem.
Regardless of uniqueness, we use the representation to rigorously find the limit behavior of the stopping boundary close to the terminal time. Interestingly, it turns out that the leading-order coefficient is universal for wide classes of problems. We also discuss how the representation can be used for numerical purposes.
\end{abstract}

\noindent \textbf{Keywords: } Brownian motion, optimal stopping, finite time horizon, American option, Fredholm integral representation, mixture of Gaussian random variables.

\section{Introduction}
Let $W$ be an $n$-dimensional standard Brownian motion started at time $t\leq 0$ in $\textbf{x} \in \rr ^{n}$ and $g:\rr_{\leq 0}\times \rr^{n} \to \rr$ a payoff function with properties to be specified later. We consider the stopping problem with finite time horizon
\begin{equation}\label{eq:sp1}
    V(t,\textbf{x}) = \sup_{t\leq\tau\leq0}\e_{(t,x)}[g(\tau,W_{\tau})].
\end{equation}
Note that we use 0 as the terminal time and we start the process from $t\leq 0$. The usefulness of this convention will become clear later.
Such problems arise in a wide variety of fields, including sequential statistics, change point detection, one-armed bandit problems starting in the 1960s, and option pricing and economics in the last decades. We refer to \cite{lai05} for an overview (with a focus on the contributions by H. Chernoff). As no closed form solutions can be expected for most problems of interest, different approaches have been suggested to gain information about the solution. In the last decades, the most common analytical  approach is to characterize the (unknown) stopping boundary of the problem in terms of a nonlinear integral equation of  Volterra-type. The main mathematical difficulty here is to prove rigorously that the nonlinear integral equation  has a unique solution. We discuss this in more detail in Section \ref{sec:2}. For some more details and historical background we refer to \cite{peskir06} and \cite{peskir05}.

The contribution of this paper is to introduce and study a new integral equation for the boundary, which is -- in contrast to the standard equations for finite time horizon problems -- of Fredholm-type. 
For simplicity, we often restrict ourselves to the case with discounting in the following, i.e., we let $r\geq 0$ and consider payoff functions of the form $g(t,\textbf{x}) = e^{-rt}h(\textbf{x})$. Let further $\g = \frac{\partial}{\partial t}+\frac{1}{2}\Delta$ be the characteristic space-time operator of $(t,W_t)$, i.e.,
\begin{align}\label{eq:generator}
	-\g g(t,\textbf{x}) = e^{-rt}\big(r h(\textbf{x}) - \frac{1}{2}\Delta h(\textbf{x})\big) =:e^{-rt}\tilde h(\textbf{x})
	\end{align}
and $C$ the continuation set of \eqref{eq:sp1}. Our main observation is that -- under natural assumptions -- the following integral equation holds:
\begin{equation}\label{eq:main5}
      0 = \int_{C}e^{\textbf{c}\cdot \textbf{y} + \left(\frac{\norm{\textbf{c}}^2}{2}-r\right)s}\tilde h (y)\d (s,\textbf{y})
\end{equation}
 for $\textbf{c}\in \rr^{n}$ to be specified later. This is shown in Theorem \ref{th:martin} in Section \ref{sec:2}. In this section we also discuss a special one-dimensional case and some generalizations  
Questions that arise are firstly whether the representation determines $C$ uniquely and secondly whether it can be used to derive analytical properties of $C$. 

The second question is tackled in Section \ref{sec:3} where we analyze the limit behavior of the continuation set $C$ for $t\to 0$. This is done independently of uniqueness, showing the properties for every set that satisfies \eqref{eq:main5}. More precisely, we derive a second order approximation for $C$ close to $t=0$ in the one-dimensional case in Subsection \ref{ssec:1}. 
In Subsection \ref{ssec:n} we discuss how the described method can be extended to multidimensional problems.  

The question of uniqueness is discussed in Section \ref{sec:u}. We prove that a version of \eqref{eq:main5} determines $C$ uniquely in the one-dimensional and one-sided case, see Theorem \ref{th:eind}. For the proof we use methods known from the identifiability of certain mixtures of Gaussian laws which we take over to non-compact parameter spaces. The methods and results presented may therefore also be of interest from a statistical point of view. The proof is analytical in nature, in contrast to the uniqueness results for the standard integral equations which usually are based on probabilistic arguments. 

In Section \ref{sec:5} we describe numerical procedures based on \eqref{eq:main5} and give some examples.
We conclude with a brief discussion of generalizations and further applications in Section \ref{sec:con}.

\section{Fredholm representation}\label{sec:2}
In this section we derive a Fredholm type integral representation for a large class of stopping problems with an $n$-dimensional Brownian motion as a driving process.

\subsection{General discounted problems}
Let $W$ be an $n$-dimensional standard Brownian motion with space-time generator $\g=\frac{\partial}{\partial t}+\frac{1}{2}\Delta$ and transition kernel $p$. For the sake of simplicity, we assume $h:\rr^{n} \to \rr$ to be in $C^{2}$ and 
 want to analyze the stopping problem 
\begin{equation}\label{eq:sp12}
    V(t,\textbf{x}) = \sup_{t\leq\tau\leq0}\e_{(t,\textbf{x})}[e^{-r\tau}h(W_{\tau})],
\end{equation}
where the supremum is taken over all stopping times $\tau$ with $t\leq\tau\leq0$ and $W_t = \textbf{x}$ a.s. Note that, in contrast to most other references, our terminal time is denoted by 0 and we start the process from $t\leq 0$. We also use the time point 0 as the reference for discounting, i.e., we use $e^{-r\tau}$ instead of $e^{-r(\tau-t)}$. The usefulness of this conventions will become clear later.

\begin{remark}\label{rem:drift}
	If a Brownian motion with drift is the driving process $X$ of the stopping problem 
	\begin{equation}\label{eq:drift}
		V(t,\textbf{x}) = \sup_{t\leq\tau\leq0}\e_{(t,\textbf{x})}[e^{-r\tau}h(X_{\tau})],
	\end{equation}
	then we can convert \eqref{eq:drift} to our setting via a measure transformation and obtain a problem of the form
	\begin{equation}
		V'(t,\textbf{x}) = \sup_{t\leq\tau\leq0}\e_{(t,\textbf{x})}[e^{-r'\tau}h'(W_{\tau})],
	\end{equation} 
	where $h'$ is a transformed payoff function and $W$ is a standard Brownian motion. We refer to \cite{christensen2020} and \cite{lerche07} for details.
\end{remark}

We denote the continuation set by 
    \[C:=\{(t,\textbf{x})\in \rr_{\leq0}\times \rr^{n}\mid V(t,\textbf{x})>e^{-rt}h(\textbf{x})\}\]
and the stopping set by $S=C^c$. For a fixed time $t$ we set $C_t := \set{\textbf{x}\in\rr^{n}\mid (t,\textbf{x})\in C}$ and define $S_t$ accordingly. We denote the first entrance time to $S$ by
$\tau^{\ast} = \tau_{S} := \inf\{t\leq s\leq 0\mid W_{s} \in S \}$ and note that this stopping time is optimal under minimal assumptions by general theory. Under suitable assumptions (see the discussion in \cite{peskir06} based on \cite{MR2408999}, see also \cite{MR4133366} and \cite{cai2021change} for some generalizations), we can apply a generalized version of Dynkin's formula to \eqref{eq:sp12} and obtain -- using $\tilde h$ according to \eqref{eq:generator} and that $\g V(t,x) = e^{-rt}\tilde h(\textbf{x}) = 0$ on $C$ --
\begin{align*}
    V(t,\textbf{x}) 
    & = \e_{(t,\textbf{x})}[h(W_0)] + \e_{(t,\textbf{x})}\left[\int_{t}^{0}\ind_{\{W_s\in S\}} e^{-rs}\tilde h(W_s)\d s\right]\\
    & =\int_{-\infty}^{\infty}h(\textbf{y})p((t,\textbf{x}),(0,\textbf{y}))\d \textbf{y} + \int_{t}^{0}\int_{S_s}e^{-rs}\tilde h(\textbf{y})p((t,\textbf{x}),(s,\textbf{y})) \d \textbf{y} \d s,
\end{align*}
where $p$ denotes the Brownian transition kernel.
In the financial context, this representation is called early-exercise-premium decomposition. In the same way we get 
\begin{align*}
    e^{-rt}h(\textbf{x})
    & = \e_{(t,\textbf{x})}[h(W_0)] + \e_{(t,\textbf{x})}\left[\int_{t}^{0} e^{-rs}\tilde h(W_s)\d s\right]\\
    & =\int_{-\infty}^{\infty}h(\textbf{y})p((t,\textbf{x}),(0,\textbf{y}))\d \textbf{y} + \int_{t}^{0}\int_{\rr^{n}}e^{-rs}\tilde h(\textbf{y})p((t,\textbf{x}),(s,\textbf{y})) \d \textbf{y} \d s.
\end{align*}
For $(t,\textbf{x})\in S$, we have $V(t,\textbf{x}) = e^{-rt}h(\textbf{x})$ so if we subtract the two equations above we obtain
\begin{equation}\label{eq:pes}
    0 = \int_{t}^{0}\int_{C_s}e^{-rt}\tilde h(\textbf{y})p((t,\textbf{x}),(s,\textbf{y})) \d \textbf{y} \d s,~ \forall (t,\textbf{x})\in S.
\end{equation}
This is nowadays a standard representation for stopping problems which we use as a starting point for our approach, so that we silently assume it to hold in the following. 

Note that $h$ does not necessarily have to be in $C^{2}$ for this to work, but could also be a weak solution to \eqref{eq:generator} in some sense, see assumptions in the literature above. For notational convenience we, however, write $\tilde h$ as a function.

Motivated by heat equations in physics, a slightly more complicated version of this integral equation was introduced by van Moerbeke in  \cite{moerbeke76} for one-sided one-dimensional problems. Van Moerbeke also showed local uniqueness of the solution of the integral representation. In the early 90s Kim \cite{kim90} and Myneni \cite{myneni92} derived \eqref{eq:pes} for the optimal exercise boundary of American options. Evaluating \eqref{eq:pes} for $(t,\textbf{x})$ in the boundary of  the stopping set leads to a useful equation for describing the stopping boundary. It was not until 2005, however, that it was rigorously proven by Peskir that this equation uniquely determined the stopping boundary for the case of the American put in the Black-Scholes market, see \cite{peskir05}. The proof is based on probabilistic arguments and has later been adapted for many other classes of problems. 
The representation \eqref{eq:pes} is for example used to numerically approximate the continuation set of stopping problems. The numeric evaluation is however not always easy. One reason is that the integrand has singularities in $(t,\textbf{x})\in \partial C$. 

As mentioned above, the key result of \cite{peskir05} was that it is enough to evaluate \eqref{eq:pes} for $(t,\textbf{x})\in \partial S\subseteq S$. The idea of the approach we suggest here is to use another subset of $S$. More precisely, we compactify the stopping region and use the \textit{infinitely far away} boundary (the Martin boundary, see Subsection \ref{ssec:gen} below) instead.
This leads to our Fredholm representation from \eqref{eq:pes}. 
For simplicity we assume that the infinite time horizon problem 
\begin{equation}\label{eq:infiniteth}
    V(\textbf{x}) = \sup_{0\leq\tau}\e_{\textbf{x}}[e^{-r\tau}h(W_\tau)]
\end{equation}
is solvable, see Section \ref{ssec:gen} for generalizations. Note that in contrast to the finite horizon case considered in this paper, the process in \eqref{eq:infiniteth} starts in 0 and $\tau$ is positive.
\begin{theorem}\label{th:martin}
    The continuation set $C$ defined by the stopping problem \eqref{eq:sp12} fulfills the equation
    \begin{equation}\label{eq:main}
        0 = \int_{-\infty}^{0}\int_{C_s}e^{\textbf{c}\cdot \textbf{y} + \left(\frac{\norm{\textbf{c}}^2}{2}-r\right)s}\tilde h (\textbf{y})\d \textbf{y} \d s
    \end{equation}
    for all $\textbf{c}$ such that $\norm{\textbf{c}}>\sqrt{2r}$ and 
    $\{(t,-t\textbf{c})\mid t\leq 0\}\cap C$ is bounded.
\end{theorem}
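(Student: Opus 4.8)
The plan is to start from the early-exercise-premium identity \eqref{eq:pes}, which holds for \emph{every} $(t,\textbf{x})\in S$, and to push the base point off to infinity along the ray $\textbf{x}=-t\textbf{c}$. The boundedness hypothesis on $\{(t,-t\textbf{c})\mid t\le 0\}\cap C$ guarantees a threshold $T<0$ with $(t,-t\textbf{c})\in S$ for all $t\le T$, so \eqref{eq:pes} is genuinely available along the tail of this ray. Writing the Brownian kernel explicitly as $p((t,\textbf{x}),(s,\textbf{y}))=(2\pi(s-t))^{-n/2}\exp(-\norm{\textbf{y}-\textbf{x}}^2/(2(s-t)))$ and substituting $\textbf{x}=-t\textbf{c}$, the claim is that, after a single $(s,\textbf{y})$-independent normalization, letting $t\to-\infty$ turns \eqref{eq:pes} into \eqref{eq:main}.

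Concretely, I would set $u:=-t>0$, so that $s-t=s+u$ and $\textbf{y}-\textbf{x}=\textbf{y}-u\textbf{c}$, and expand the exponent
\[
-\frac{\norm{\textbf{y}-u\textbf{c}}^2}{2(s+u)}
= -\frac{u\norm{\textbf{c}}^2}{2}+\frac{\norm{\textbf{c}}^2}{2}\,\frac{us}{s+u}+\frac{u\,\textbf{c}\cdot\textbf{y}}{s+u}-\frac{\norm{\textbf{y}}^2}{2(s+u)}.
\]
The first summand together with the Gaussian prefactor yields the factor $\exp(-u\norm{\textbf{c}}^2/2)\,(2\pi u)^{-n/2}$, which does not depend on $(s,\textbf{y})$ and is nonzero, so dividing the whole equation by it is harmless. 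Since $us/(s+u)\to s$ and $u/(s+u)\to 1$ as $u\to\infty$, the remaining integrand converges pointwise to $\tilde h(\textbf{y})\exp(\textbf{c}\cdot\textbf{y}+(\tfrac{\norm{\textbf{c}}^2}{2}-r)s)$, where the $e^{-rs}$ comes from \eqref{eq:pes}. Rewriting the moving lower limit $\int_{-u}^{0}$ as $\int_{-\infty}^{0}$ against an indicator $\ind_{\{s\ge -u\}}\to 1$, the limit of the normalized equation is exactly \eqref{eq:main}. Conceptually this is the Martin-boundary statement that the normalized Brownian kernel based at $(t,-t\textbf{c})$ converges as $t\to-\infty$ to the space-time harmonic function $e^{\textbf{c}\cdot\textbf{y}+\norm{\textbf{c}}^2 s/2}$ (annihilated by the forward operator $\partial_s-\tfrac12\Delta$), i.e.\ to the \emph{infinitely far away} boundary point of $C$ in direction $\textbf{c}$.

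The main obstacle is justifying the interchange of limit and integral: I would construct a majorant on $(-\infty,0]\times\rr^{n}$, integrable over $\bigcup_s(\{s\}\times C_s)$, that dominates the normalized integrand uniformly in $u$. Two decays must be controlled simultaneously. In time, the hypothesis $\norm{\textbf{c}}>\sqrt{2r}$ is precisely what forces $\exp((\tfrac{\norm{\textbf{c}}^2}{2}-r)s)\to 0$ as $s\to-\infty$, giving integrability of the limit in $s$; for finite $u$ one checks the same sign near the moving endpoint, where $us/(s+u)\to-\infty$ makes the integrand vanish. In space, one must bound $\lvert\tilde h(\textbf{y})\rvert\,e^{\textbf{c}\cdot\textbf{y}}$ on $C_s$, and here the growth assumptions on $h$ together with the geometry inherited from solvability of the infinite-horizon problem \eqref{eq:infiniteth} — which is what restricts the admissible directions $\textbf{c}$ and keeps $C_s$ on the side where $e^{\textbf{c}\cdot\textbf{y}}$ stays tame — are exactly what yield integrability over $C_s$. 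Packaging the time- and space-decays into one dominating function and invoking dominated convergence is the technical heart of the argument; the elementary expansion above is then all that remains.
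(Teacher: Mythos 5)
Your route is the same as the paper's: evaluate \eqref{eq:pes} along the ray $\textbf{x}=-t\textbf{c}$ (legitimate for $t$ small by the boundedness hypothesis), divide by an $(s,\textbf{y})$-independent normalization, and pass to the limit $t\to-\infty$ inside the integral. Indeed, your normalizing constant $(2\pi u)^{-n/2}e^{-u\norm{\textbf{c}}^2/2}$ is nothing but $p((t,-t\textbf{c}),(0,\textbf{0}))$, so your expansion reproduces the paper's Martin-kernel limit \eqref{eq:martin}; up to that point the proposal is correct and identical in substance to the paper's argument.

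The gap is in the dominated-convergence step, and it is not a removable technicality: your sketch locates the danger in the wrong place. You claim that near the moving endpoint $s\approx -u$ the factor $e^{\frac{\norm{\textbf{c}}^2}{2}\frac{us}{s+u}}\to 0$ ``makes the integrand vanish,'' and that the remaining difficulty is the spatial growth of $|\tilde h(\textbf{y})|e^{\textbf{c}\cdot\textbf{y}}$ on $C_s$. But collecting all three terms of your own expansion, the exponent of the normalized kernel equals
\begin{equation*}
\frac{u\norm{\textbf{c}}^2}{2}-\frac{\norm{\textbf{y}-u\textbf{c}}^2}{2(s+u)},
\end{equation*}
with prefactor $\left(\frac{u}{s+u}\right)^{n/2}$: this is the heat-kernel singularity at the base point $(-u,u\textbf{c})$, and it blows up, rather than vanishes, as $(s,\textbf{y})$ approaches that point. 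The relevant comparison is with the limit function: the normalized kernel exceeds $e^{\textbf{c}\cdot\textbf{y}+\frac{\norm{\textbf{c}}^2}{2}s}$ precisely on the parabolic tube $\norm{\textbf{c}s+\textbf{y}}<\sqrt{n(t-s)\ln(1-s/t)}$ around the ray, a family of sets increasing (as $t\to-\infty$) to $A_\infty=\{(s,\textbf{y})\mid \norm{\textbf{c}s+\textbf{y}}<\sqrt{n(-s)}\}$, and the excess factor $\left(\frac{u}{s+u}\right)^{n/2}$ is unbounded wherever $s$ can stay close to $-u$ for arbitrarily large $u$. So if $C$ met the tube at arbitrarily negative times, no majorant uniform in large $u$ would exist there, and no growth assumption on $h$ or on the geometry of $C_\infty$ can repair this, because the problem sits in the kernel, not in $\tilde h$. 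What closes the gap --- and this is exactly the content of the paper's Appendix \ref{sec:appendix} --- is a second use of the very hypothesis you invoked only once: boundedness of $\{(t,-t\textbf{c})\mid t\leq 0\}\cap C$ implies that $A_\infty\cap C$ is bounded, hence on that part of $C$ the time variable is bounded below, the normalized kernels are uniformly bounded by some $M$ for $t$ small, and one may dominate by $M\,e^{\textbf{c}\cdot\textbf{y}+\frac{\norm{\textbf{c}}^2}{2}s}e^{-rs}|\tilde h(\textbf{y})|$ there, while off the tube the (assumed integrable) limit function itself is the majorant. Without this geometric step your dominated-convergence argument does not go through.
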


\begin{proof}
    We pick $\textbf{c}\in\rr^{n}$ such that $\{(t,-t\textbf{c})\mid t\leq 0\}\cap C$ is bounded, then $(t,-t\textbf{c}) \in S$ for $t$ small enough. We set $\textbf{x} = -t\textbf{c}$ in \eqref{eq:pes}, divide by $p((t,-\textbf{c}t),(0,\textbf{0}))$ and obtain
    \begin{equation*}
        0 = \int_{t}^{0} \int_{C_s}e^{-rs}\tilde h (\textbf{y})\frac{p((t,-\textbf{c}t),(s,\textbf{y}))}{p((t,-\textbf{c}t),(0,\textbf{0}))} \d \textbf{y}\d s
    \end{equation*}
    for all $t$ small enough, if the integral exists.
    Taking limits we obtain
    \begin{equation}\label{eq:lim1}
        0 = \lim_{t\to -\infty}\int_{t}^{0} \int_{C_s}e^{-rs}\tilde h (\textbf{y})\frac{p((t,-\textbf{c}t),(s,\textbf{y}))}{p((t,-\textbf{c}t),(0,\textbf{0}))} \d \textbf{y} \d s.
    \end{equation}
    We analyze the quotient in  the integral. The kernel $p$ is the density of an $n$-dimensional normal distribution, so we have
    \begin{equation*}
    \begin{split}
        &\frac{p((t,-\textbf{c}t),(s,\textbf{y}))}{p((t,-\textbf{c}t),(0,\textbf{0}))}
       = \frac{(2\pi)^{-\frac{n}{2}} (s-t)^{-\frac{n}{2}} \exp\left(-\frac{\norm{-\textbf{c}t-\textbf{y}}^2}{2(s-t)}\right)}{(2\pi)^{-\frac{n}{2}}(-t)^{-\frac{n}{2}}\exp \left(-\frac{\norm{-\textbf{c}t}^2}{2(-t)}\right)} \\
        & \qquad\qquad = \left(\frac{-t}{s-t}\right)^{\frac{n}{2}} \exp\left(-\frac{\norm{-\textbf{c}t-\textbf{y}}^2}{2(s-t)} + \frac{\norm{-\textbf{c}t}^2}{2(-t)}\right)\\
        & \qquad\qquad  = \left(\frac{-t}{s-t}\right)^{\frac{n}{2}} \exp\left(\frac{\left(\norm{\textbf{c}}^2t^2 + 2\textbf{c}\cdot \textbf{y} t + \norm{\textbf{y}}^2\right)t + \norm{\textbf{c}}^2t^2(s-t)}{2(t^2-st)} \right)\\
        & \qquad\qquad  = \left(\frac{-t}{s-t}\right)^{\frac{n}{2}} \exp\left(\frac{2\textbf{c}\cdot \textbf{y} t^2 + t\norm{\textbf{y}}^2 +\norm{\textbf{c}}^2t^2s }{2(t^2-st)} \right),
    \end{split}
    \end{equation*}
    where $\cdot$ denotes the standard scalar product and $\norm{\cdot}$ the corresponding norm.
    We calculate the limit and have
\begin{align}\label{eq:martin}
	\lim_{t\to -\infty}\frac{p((t,-\textbf{c}t),(s,\textbf{y}))}{p((t,-\textbf{c}t),(0,\textbf{0}))} = \exp\left(\textbf{c}\cdot \textbf{y} + \frac{\norm{\textbf{c}}^2}{2}s \right).
	\end{align}
    We can pull the limit into the integral in \eqref{eq:lim1} (for a proof of that fact see Appendix \ref{sec:appendix})
    and obtain
    \begin{equation}
        0 = \int_{-\infty}^{0}\int_{C_s}e^{\textbf{c}\cdot \textbf{y} + \frac{\norm{\textbf{c}}^2}{2}s}e^{-rs}\tilde h (\textbf{y})\d \textbf{y} \d s
        =\int_{C}e^{\textbf{c}\cdot \textbf{y} + \frac{\norm{\textbf{c}}^2}{2}s}e^{-rs}\tilde h (\textbf{y})\d \textbf{y} \d s
    \end{equation}
    for all $\textbf{c}$ such that $\{(t,-t\textbf{c})\mid t\leq 0\}\cap C$ is bounded and the integral exists.
\end{proof}

The Fredholm type integral equation \eqref{eq:main} is highly non-linear. As far as we know, equations of this type have not been analyzed in the literature before.
To simplify equation \eqref{eq:main}, we define
    $C_{\infty}:=\bigcup_{t\leq 0} C_t$.
Note that in cases where the infinite time horizon problem
\begin{equation}\label{eq:infhorizon}
    V(\textbf{x}) = \sup_{0\leq\tau}\e_{\textbf{x}}[e^{-r\tau}h(W_\tau)]
\end{equation}
is solvable, $C_{\infty}$ is the solution to that problem under weak assumptions. 
Since the dimension of \eqref{eq:infhorizon} is reduced by one, $C_{\infty}$ is usually easier to find than $C$. This is well-known in the case $d=1$, see \cite{MR827892} or \cite{MR1999788}. An approach for $d>1$ is discussed in \cite{christensen16}.
We now look at some properties of $C_t$.
It is first easily seen that $C_t$ is decreasing in $t$. 
We also know that $C_0 = \set{\textbf{x}\mid \tilde h(\textbf{x})<0}$. We define the function
    \[d:C_\infty\setminus C_0 \to \rr_{\leq 0}, ~ \textbf{x}\mapsto \sup\set{t\leq0 \mid(t,\textbf{x})\in C}.\]
Since $C_t$ is decreasing, we see that $d$ defines $C$ via $C_t = d^{-1}((t,0])$.\\    
We can now simplify \eqref{eq:main} using Fubini's lemma 
\begin{alignat}{2}
    &&-\int_{-\infty}^{0} \,\int_{C_0}  e^{\frac{\|\textbf{c}\|^{2}}{2}s + \textbf{c}\cdot \textbf{y}-rs}\tilde h(\textbf{y})\d \textbf{y} \d s    
    &= \int_{-\infty}^{0}\int_{C_s\setminus C_0}  e^{\frac{\|\textbf{c}\|^{2}}{2}s + \textbf{c}\cdot y-rs}\tilde h(\textbf{y})\d \textbf{y} \d s \nonumber\\
    &\iff&
    -\int_{C_0} \,\int_{-\infty}^{0}  e^{\frac{\|\textbf{c}\|^{2}}{2}s + \textbf{c}\cdot \textbf{y}-rs}\tilde h(\textbf{y})\d s\d \textbf{y}   
    &=\int_{C_\infty\setminus C_0}\int_{-\infty}^{d(\textbf{y})}  e^{\frac{\|\textbf{c}\|^{2}}{2}s + \textbf{c}\cdot y-rs}\tilde h(\textbf{y})\d s \d \textbf{y}       \nonumber \\
     &\iff&
     -\int_{C_0}  e^{\textbf{c}\cdot \textbf{y}}\tilde h(\textbf{y})\d \textbf{y}
     &=\int_{C_{\infty}\setminus C_0} e^{\left(\frac{\|\textbf{c}\|^{2}}{2} - r\right)d(\textbf{y}) + \textbf{c}\cdot \textbf{y}}\tilde h(\textbf{y}) \d \textbf{y}   \label{eq:9}     
\end{alignat}
for all $\textbf{c}$ with $\norm{\textbf{c}} >\sqrt{2r}$ for which $\{(t,-t\textbf{c})\mid t\leq 0\} \cap C$ is bounded.
The integral on the left-hand side of \eqref{eq:9}
can be interpreted as a Laplace transform transform of $\tilde h$.

\begin{remark}
	The condition that $\{(t,-t\textbf{c})\mid t\leq 0\} \cap C$ is bounded does not seem easy to handle at the first moment. However, the previous considerations provide a simple sufficient condition:
	If $C_\infty$ is bounded, then $\{(t,-t\textbf{c})\mid t\leq 0\}\cap C$ is bounded for all $\textbf{c}\neq 0$ and \eqref{eq:main} holds for all $\textbf{c}$ with $\norm{\textbf{c}}>\sqrt{2r}$. An analogous result holds in one-sided cases, see the following discussion.
	\end{remark}

\subsection{The one-dimensional and one-sided case}\label{ssec:2}
%

In one dimension an important class of stopping problems is the class of problems with one-sided solutions. These have a continuation set that can be written as 
\[C = \set{(t,x)\mid x<b(t)}\]
for some function $b:\rr_{\leq0}\to\rr$. In the discounted setting, $b$ is the inverse function of $d$ constructed above.
The stopping boundary $b$ is decreasing and we define
    \[b_\infty:= \lim_{t\to-\infty} b(t) .\] 
If the corresponding infinite time horizon stopping problem is solvable, then $b_\infty<\infty$ and 
$C_\infty = (-\infty,b_\infty)$. Then, \eqref{eq:main} holds for all $c>\sqrt{2r}$. We can assume w.l.o.g.\ that $C_0 = (-\infty,0)$. Then, the integral transformation from \eqref{eq:9} is the Laplace transformation $\mathcal{L}$ and the representation can, for all $c>\sqrt{2r}$, be written as
\begin{equation}
   -\mathcal{L}\tilde h(c) = \int_{0}^{b_\infty} e^{\left(\frac{c^{2}}{2} - r\right)d(y) + cy}\tilde h(y) \d y .
\end{equation}

\subsection{Generalizations}\label{ssec:gen}
The Fredholm representation also holds for general payoff functions -- under some mild technical assumptions. Let $g:\rr_{\leq 0}\times \rr^{n} \to \rr$ be in $C^{1,2}$.
\begin{theorem}\label{th:main12}
    The continuation set $C$ defined by the stopping problem \eqref{eq:sp1} fulfills the equation
    \begin{equation}
        0 = \int_{-\infty}^{0}\int_{C_s}e^{\textbf{c}\cdot \textbf{y} + \frac{\norm{\textbf{c}}^2}{2}s}(-\g g)(s,\textbf{y}) \d \textbf{y} \d s
    \end{equation}
    for all $\textbf{c}$ for which the integral exists and there is an $\varepsilon >0$ such that $\{(t,-t(\textbf{c}+\textbf{a}))\mid t\leq 0, \norm{a}<\varepsilon\}\cap C$ is bounded.
\end{theorem}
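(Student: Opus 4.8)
The plan is to mirror the proof of Theorem~\ref{th:martin}, replacing the explicit discounted integrand by the general one and upgrading the boundedness hypothesis to compensate for the loss of structure. First I would re-derive the early-exercise premium decomposition for a general $C^{1,2}$ payoff $g$. Applying the generalized Dynkin formula to $V$, using $\g V = 0$ on $C$ together with the terminal identity $V(0,\cdot)=g(0,\cdot)$, and applying the same formula to $g$ itself, then subtracting on $S$ where $V=g$, yields the analog of~\eqref{eq:pes},
\begin{equation*}
    0 = \int_{t}^{0}\int_{C_s}(-\g g)(s,\textbf{y})\, p((t,\textbf{x}),(s,\textbf{y}))\, \d \textbf{y}\, \d s, \qquad \forall\, (t,\textbf{x})\in S,
\end{equation*}
where now $(-\g g)(s,\textbf{y})$ stands in place of $e^{-rs}\tilde h(\textbf{y})$. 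This is the only point at which the discounted structure is actually used, so the general case requires no new idea here.

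Next I would specialize to the ray $\textbf{x}=-t\textbf{c}$ and divide by $p((t,-\textbf{c}t),(0,\textbf{0}))$. The boundedness hypothesis guarantees $(t,-t\textbf{c})\in S$ for all $t$ sufficiently small, so the identity is valid along this ray for such $t$. Since the kernel quotient depends only on the Gaussian transition density and not on the payoff, the pointwise limit is \emph{verbatim}~\eqref{eq:martin}, namely $\exp(\textbf{c}\cdot\textbf{y}+\tfrac{\norm{\textbf{c}}^2}{2}s)$, and no recomputation is needed. Letting $t\to-\infty$ and interchanging limit and integral then produces the asserted representation once $\int_{-\infty}^{0}\int_{C_s}$ is rewritten as $\int_C$.

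The main obstacle is justifying the interchange of limit and integral, and this is precisely where the stronger hypothesis---that $\{(t,-t(\textbf{c}+\textbf{a}))\mid t\leq 0,\ \norm{\textbf{a}}<\varepsilon\}\cap C$ is bounded for some $\varepsilon>0$---replaces the conditions $\norm{\textbf{c}}>\sqrt{2r}$ and single-ray boundedness used in Theorem~\ref{th:martin}. In the discounted case the factorization $(-\g g)(s,\textbf{y})=e^{-rs}\tilde h(\textbf{y})$ together with the sign of $\tfrac{\norm{\textbf{c}}^2}{2}-r$ supplied the decay in $s$ that made the dominating function of Appendix~\ref{sec:appendix} integrable. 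Absent that structure, I would use the $\varepsilon$-neighborhood to tame the dangerous factor $\exp(\textbf{c}\cdot\textbf{y})$: robustness of the boundedness under small perturbations $\textbf{a}$ of the direction confines $C$ to a cone-like region around $\textbf{c}$, on which $\textbf{c}\cdot\textbf{y}$ cannot outgrow the Gaussian tail decay of the kernel ratio. This should furnish a majorant, uniform in $t$ below some threshold, of the form (a uniform bound on the kernel quotient) times $|(-\g g)(s,\textbf{y})|$, integrable over $C$, after which dominated convergence closes the argument. Verifying that the $\varepsilon$-cone condition indeed produces such an integrable majorant, by adapting the estimate of Appendix~\ref{sec:appendix} to the perturbed directions, is the technical heart of the proof.
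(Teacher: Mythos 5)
Your overall skeleton is the same as the paper's: the paper proves Theorem \ref{th:main12} by repeating the proof of Theorem \ref{th:martin} verbatim with $(-\g g)(s,\textbf{y})$ in place of $e^{-rs}\tilde h(\textbf{y})$ (the premium decomposition, the ray specialization, and the payoff-independent kernel limit \eqref{eq:martin} all carry over unchanged), and the sole role of the enlarged boundedness hypothesis is to make the dominated convergence step of Appendix \ref{sec:appendix} work. Your steps 1 and 2 match this exactly.

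However, your account of the one step where new content is actually required is backwards, and since you explicitly defer its verification, this is a genuine gap. The hypothesis that $\{(t,-t(\textbf{c}+\textbf{a}))\mid t\leq 0,\ \norm{\textbf{a}}<\varepsilon\}\cap C$ is bounded does not ``confine $C$ to a cone-like region around $\textbf{c}$''; it says the opposite: far from the origin, $C$ \emph{avoids} that cone. Nor is $e^{\textbf{c}\cdot\textbf{y}}$ the dangerous factor: it sits inside the limit integrand $e^{\textbf{c}\cdot\textbf{y}+\frac{\norm{\textbf{c}}^2}{2}s}(-\g g)(s,\textbf{y})$, whose integrability over $C$ is assumed outright in the statement (``for which the integral exists''), so it needs no taming. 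The actual danger, as in Appendix \ref{sec:appendix}, is the region where the pre-limit kernel ratio \emph{exceeds} its limit; rearranging that inequality shows this region is a parabolic tube around the ray, contained in $A_{\infty}=\{(s,\textbf{y})\mid \norm{\textbf{c}s+\textbf{y}}<\sqrt{n(-s)}\}$. The point of the $\varepsilon$-cone hypothesis is then purely geometric: the tube's time-$s$ radius $\sqrt{n(-s)}$ is $o(-s)$, so once $-s\geq n/\varepsilon^{2}$ the tube's slice lies inside the cone's slice (radius $\varepsilon(-s)$); hence $A_{\infty}\cap C$ is bounded, on it the kernel ratio is bounded by a constant multiple of the limit function for $t$ below a threshold, and the majorant is that constant times the assumed-integrable limit integrand. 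Note also why the single-ray condition sufficed in Theorem \ref{th:martin} but not here: the appendix upgrades ray-boundedness to tube-boundedness using $C_t\subset C_{\infty}$ and monotonicity, which are features of the discounted structure; without discounting that upgrade is unavailable, so the cone condition is assumed directly. It does not, as you suggest, substitute for $\norm{\textbf{c}}>\sqrt{2r}$ — that role (integrability of the limit integrand) is played by the separate existence-of-the-integral assumption, which appears in both theorems.
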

he proof follows directly  as in the discounted case. We need the additional boundedness condition
for the application of the dominated convergence theorem. 

In the previous discussion, we limited ourselves to the case of Brownian motion as a driving process. The point at which this assumption entered centrally was the calculation of the limit \eqref{eq:martin}. The question arises for which more general Markov processes $X$ corresponding limits -- possibly along other paths than straight lines -- exist. This question is closely related to the classical Martin-boundary theory, see \cite{MR1814344} and \cite{chung2006markov}. More precisely, general potential theory provides that the boundary functions thus arising are exactly the harmonic functions of the space-time Markov process $(t,X_t)_{t\geq 0}$ and the equation analogous to \eqref{eq:main} reads as
\[    
	0 = \int_{-\infty}^{0}\int_{C_s}\kappa_c(s,\textbf{y})e^{-rs}\tilde h (\textbf{y})\d \textbf{y} \d s,
\]
where $\kappa_c$ denotes a family of harmonic functions, such that -- for a suitable index set $I$ -- $\{\kappa_c:c\in I\}$ denotes the Martin boundary. 
Thus, whether the previous procedure can be fruitfully applied is determined by whether this Martin boundary can be found explicitly and is rich enough. This question has been discussed in the literature for different example classes. Reference is made, for example, to the case of geometric Brownian motion in \cite{christensen2013riesz} and to \cite{MR606005} for Cauchy-, $d$-dimensional Bessel- and Poisson processes. 
In the latter article it is shown that in the case of an underlying Cauchy process the Martin-boundary consists only of the constant functions, so that the approach presented here is not suitable for characterizing the stopping boundary for general processes. Because of this and since the arguments used in the following sections are process-specific, we continue to focus on the\ case of Brownian motion and leave the study of other problem classes for future research.

\subsection{An example}
We conclude the section with an example to illustrate how the Fredholm representation can be used to tackle explicitly solvable problems.

\begin{example}\label{ex:stadje}
    Let $n=1$ , $h(x) = \frac{x^3}{3}$ and $r=0$. We have $\tilde h(x) = -x$.
    Motivated by Brownian scaling, we make the ansatz that the continuation set is of the form 
        \[C = \{(t,x) \mid x < \alpha \sqrt{-t}\}\]
    for some $\alpha >0$. We have $-\g g(t,x) = -x$, plugging this into \eqref{eq:main} we get
    \begin{align*}
        0 & = \int_{-\infty}^{0}\int_{-\infty}^{\alpha \sqrt{-s}} -x e^{c y + \frac{c^2}{2}s}\d y \d s
         = \frac{1}{c^2}\int_{-\infty}^{0}(1-\alpha c \sqrt{-s})e^{\frac{c^2}{2}s+ \alpha c \sqrt{-s}} \d s\\
        & = \frac{1}{c^2}\left( -2\alpha^3 \frac{\Phi(\alpha)}{\phi(\alpha)} +(1-\alpha^2)  \right) 
    \end{align*}
    where $\Phi$ and $\phi$ denote CDF and PDF of a standard normal distribution, resp. 
    For $c>0$ the equation can be reformulated as $\alpha^3 \Phi(\alpha)  = (1-\alpha^2)\phi(\alpha)$ which has a unique positive solution at $\alpha_1 \approx 0.638833$.
    
    By the uniqueness results in Section \ref{sec:u}, we will see that indeed the continuation set of the problem is
        \[C = \{(t,x) \mid x < \alpha_1 \sqrt{-t}\}.\]
    We remark that the \emph{commodity sales} problem described by Stadje in \cite{stadje87}
        $  V(t,x) = \sup_{t\leq\tau\leq0}\e_{(t,x)}[\tau W_\tau]$
    leads to the same Fredholm integral representation as our example and hence has the same solution.
\end{example}

\section{Limit behavior}\label{sec:3}
In this section we show how to use the Fredholm representation to derive analytical properties of $C$. In particular, we study the limit behavior of $C_t$ for $t\to 0$. We do this rigorously for the one-dimensional, one-sided discounted case in Subsection \ref{ssec:1} and give heuristic arguments for multidimensional stopping problems in Subsection \ref{ssec:n}. Our technique builds on Theorem \ref{th:martin} and is independent of the uniqueness discussed in Section \ref{sec:u}.
In special cases, the results are known, but to our knowledge they are new in the generality given below.

\subsection{The one-dimensional case}\label{ssec:1}
In the setting of Section \ref{ssec:2} assume w.l.o.g. that $C_0 = (-\infty,0)$, so our main equation reads
 \begin{equation}\label{eq:10}
   -\mathcal{L}\tilde h(c) 
   = \int_{0}^{b_\infty} e^{\left(\frac{c^{2}}{2} - r\right)d(y) + cy}\tilde h(y) \d y 
\end{equation}
for all $c>\sqrt{2r}$.
  We assume for now that $\tilde h$ is continuous. Then, since $\tilde h(x) \leq 0$ for $x<0$ and $\tilde h(x) \geq 0$ for $x>0$, we have $\tilde h(0) =0 $. Central for the limit behavior is the degree of $h$ close to 0. The most common case is that $\tilde h$ is approximately linear, i.e., $\tilde h(x) = m x + o(x)$ for some $m>0$ in a neighborhood of 0. We will stick to that case for sake of clarity and refer to Remark \ref{rem:nonlin} below for generalizations.
%
 For the left-hand side of \eqref{eq:10} we have
   $\lim_{c\to\infty}-c^{2}\mathcal{L}\tilde h(c)   =m$
 so we obtain
    \[m = \lim_{c\to\infty}c^2\int_{0}^{b_\infty} e^{\left(\frac{c^{2}}{2} - r\right)d(y) + cy}\tilde h(y) \d y. \]
The following two lemmata show that only a small neighborhood of 0 and the limit behavior of $\tilde h(x)$ for $x\to 0$ are relevant for the limit behavior of $C_t$.
    
\begin{lemma}\label{lem:eps1}
    For all $\varepsilon > 0$ it holds
        \[\lim_{c\to\infty}c^2\int_{0}^{b_\infty} e^{\left(\frac{c^{2}}{2} - r\right)d(y) + cy}\tilde h(y) \d y
        =\lim_{c\to\infty}c^2\int_{0}^{\varepsilon} e^{\left(\frac{c^{2}}{2} - r\right)d(y) + cy}\tilde h(y) \d y.\]
\end{lemma}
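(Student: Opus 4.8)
The plan is to show that the contribution of the tail $[\varepsilon,b_\infty]$ is negligible, i.e.\ that
\[
\lim_{c\to\infty} c^2\int_{\varepsilon}^{b_\infty} e^{\left(\frac{c^2}{2}-r\right)d(y)+cy}\tilde h(y)\,\d y = 0 .
\]
Indeed, writing $c^2\int_0^{b_\infty}=c^2\int_0^\varepsilon+c^2\int_\varepsilon^{b_\infty}$, the two quantities in the lemma differ precisely by this tail integral, so its vanishing gives the asserted equality of limits (and since $-c^2\mathcal L\tilde h(c)\to m$ was already seen to converge, both limits exist). Because the infinite horizon problem is solvable we have $b_\infty<\infty$, so this is an integral over the \emph{compact} interval $[\varepsilon,b_\infty]$, on which the continuous function $\tilde h$ is bounded, say $|\tilde h|\le M$.

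The mechanism is that the factor $e^{(\frac{c^2}{2}-r)d(y)}$ decays like a Gaussian in $c$ on the whole tail. First I would exploit that $d$ is decreasing: for every $y\in[\varepsilon,b_\infty]$ one has $d(y)\le d(\varepsilon)=:-\delta$, and $\tfrac{c^2}{2}-r>0$ for $c$ large, so the exponent is bounded by
\[
\left(\frac{c^2}{2}-r\right)d(y)+cy \;\le\; -\frac{\delta}{2}\,c^2 + c\,b_\infty + r\delta .
\]
This gives the crude estimate
\[
\left| c^2\int_{\varepsilon}^{b_\infty} e^{\left(\frac{c^2}{2}-r\right)d(y)+cy}\tilde h(y)\,\d y \right|
\;\le\; M\,(b_\infty-\varepsilon)\, c^2\, e^{-\frac{\delta}{2}c^2 + c\,b_\infty + r\delta},
\]
whose right-hand side tends to $0$ as $c\to\infty$, the Gaussian factor $e^{-\delta c^2/2}$ dominating both the polynomial prefactor $c^2$ and the exponential $e^{c b_\infty}$. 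This is the entire estimate, and it is not delicate.

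The only genuine obstacle is securing the strict inequality $\delta=-d(\varepsilon)>0$, i.e.\ that $d$ stays bounded away from $0$ on $[\varepsilon,b_\infty]$; once this holds the bound above finishes the proof. This is where continuity of the stopping boundary at the terminal time enters. Since $C_0=(-\infty,0)$ we have $b(0)=0$, and continuity at $0$ (equivalently $b(0^-)=0$, recalling $b$ is decreasing so $b(0^-)\ge b(0)=0$) forces $b(t)<\varepsilon$ on a whole left neighbourhood of $0$; hence $(t,y)\in S$ for all $y\ge\varepsilon$ and all such $t$, which yields $d(\varepsilon)<0$. Without this regularity the argument would break for small $\varepsilon$: a jump $b(0^-)>0$ would permit $d(\varepsilon)=0$, and then the growth $e^{cy}$ up to $y=b_\infty$ could no longer be absorbed. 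I would therefore invoke the continuity of $b$ at the terminal time as the key structural input and treat the remaining steps as the elementary estimate above.
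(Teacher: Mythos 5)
Your proof is correct and follows essentially the same route as the paper's: split off the tail $[\varepsilon,b_\infty]$, use monotonicity of $d$ to bound the exponent by $\left(\frac{c^2}{2}-r\right)d(\varepsilon)+cb_\infty$, and let the Gaussian factor $e^{d(\varepsilon)c^2/2}$ absorb $c^2e^{cb_\infty}$. The only difference is that you explicitly justify $d(\varepsilon)<0$ via continuity of the boundary at the terminal time, a point the paper simply asserts ``from the construction of the stopping problem''; this is a legitimate sharpening, not a deviation.
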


\begin{proof}
    We know from the construction of the stopping problem that
    $d$ is non-increasing, $d(0) = 0$ and $d(x)<0$ for all $x > 0$. Then $d(\varepsilon) < 0$ and $d(x)\leq d(\varepsilon)$ for all $x \geq \varepsilon$. We have
    \begin{equation*}
    \begin{split}
       \left|\lim_{c\to\infty}c^2\int_{\varepsilon}^{b_\infty} e^{\left(\frac{c^{2}}{2} - r\right)d(y) + cy}\tilde h(y) \d y\right|
        &\leq  C_1 \lim_{c\to\infty}c^2\int_{\varepsilon}^{b_\infty} e^{\left(\frac{c^{2}}{2} - r\right)d(\varepsilon) + cy}\d y\\
         &\leq  C_2 \lim_{c\to\infty}c^2 e^{\frac{c^{2}}{2} d(\varepsilon)}    \int_{\varepsilon}^{b_\infty} e^{cy}\d y \\
         &=  C_2 \lim_{c\to\infty}c^2 e^{\frac{c^{2}}{2} d(\varepsilon)}   \frac{1}{c}(e^{cy}-1) = 0,
    \end{split}
    \end{equation*}
    where $C_1$ and $C_2$ are some constants. The result follows because of $[0,b_\infty] = [0,\varepsilon)\cup [\varepsilon,b_{\infty}]$ and the linearity of the integral.
\end{proof}

\begin{lemma}\label{lem:eps2}
    If $\tilde h(x) = mx + o(x)$ in a neighborhood of 0, then
        \[\lim_{c\to\infty}c^2\int_{0}^{b_\infty} e^{\left(\frac{c^{2}}{2} - r\right)d(y) + cy}\tilde h(y) \d y
        =\lim_{c\to\infty}c^2\int_{0}^{b_{\infty}} e^{\left(\frac{c^{2}}{2} - r\right)d(y) + cy}m y \d y.\]
\end{lemma}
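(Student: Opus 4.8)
The plan is to reduce the claim to showing that the contribution of the error term $o(y)$ in the expansion $\tilde h(y) = my + o(y)$ vanishes after multiplication by $c^2$ and passage to the limit. By the linearity of the integral, it suffices to prove
\begin{equation*}
    \lim_{c\to\infty}c^2\int_{0}^{b_\infty} e^{\left(\frac{c^{2}}{2} - r\right)d(y) + cy}\big(\tilde h(y) - my\big) \d y = 0.
\end{equation*}
By Lemma \ref{lem:eps1}, applied to the payoff-like integrand $\tilde h(y) - my$ (which also vanishes at $0$ and is continuous), the limit over $[0,b_\infty]$ equals the limit over $[0,\varepsilon]$ for any fixed $\varepsilon>0$. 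So I only ever need to control the integral on a small neighborhood of $0$, where the assumption $\tilde h(y) - my = o(y)$ is in force.

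First I would fix $\delta>0$ and choose $\varepsilon>0$ small enough that $|\tilde h(y) - my| \leq \delta y$ for all $y\in[0,\varepsilon]$. On this interval the factor $d(y)\leq 0$, so $e^{(\frac{c^2}{2}-r)d(y)}\leq 1$ once $c>\sqrt{2r}$, and I can bound
\begin{equation*}
    c^2\left|\int_{0}^{\varepsilon} e^{\left(\frac{c^{2}}{2} - r\right)d(y) + cy}\big(\tilde h(y)-my\big)\d y\right|
    \leq \delta\, c^2\int_{0}^{\varepsilon} y\, e^{cy}\d y.
\end{equation*}
The remaining integral $c^2\int_0^\varepsilon y\,e^{cy}\d y$ is elementary; integrating by parts gives $c^2\int_0^\varepsilon y e^{cy}\d y = c\,\varepsilon e^{c\varepsilon} - e^{c\varepsilon}+1$, which grows like $c\,\varepsilon e^{c\varepsilon}$. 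This is the key difficulty: the naive bound $e^{cy}$ on the whole strip is far too crude, since it diverges as $c\to\infty$, and so the error estimate cannot be closed using the factor $e^{cy}$ alone.

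To overcome this I would not discard the exponential in $d(y)$ but rather exploit the behavior of $d$ near $0$. The correct heuristic, visible from the universal coefficient result the authors are heading toward, is that the relevant mass concentrates where $y$ is of order $1/c$ and $d(y)$ is of order $-1/c^2$, so the genuine decay comes from the interplay of $e^{(\frac{c^2}{2}-r)d(y)}$ and $e^{cy}$ together, not from $e^{cy}$ in isolation. Concretely I would compare the error integral with the corresponding leading integral $c^2\int_0^\varepsilon y\,e^{(\frac{c^2}{2}-r)d(y)+cy}\d y$ (which by the $m\neq 0$ case must converge to a finite nonzero multiple of $m$), and show via the pointwise bound $|\tilde h(y)-my|\leq \delta y$ that the error is at most $\delta$ times that convergent quantity. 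Since the leading integral has a finite limit, its $\delta$-multiple tends to $\delta\cdot(\text{const})$, and letting $\delta\to 0$ at the end forces the error limit to be $0$. The main obstacle is thus to justify that the full weighted integral $c^2\int_0^\varepsilon y\,e^{(\frac{c^2}{2}-r)d(y)+cy}\d y$ stays bounded as $c\to\infty$; this I would extract from Lemma \ref{lem:eps2}'s right-hand side being the convergent limit already established for the linear profile, so that the $\varepsilon$-localized version is bounded uniformly in $c$, and the $\delta$-argument then closes the proof.
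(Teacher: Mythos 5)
Your proposal follows the same route as the paper's proof: reduce to the error term $\tilde h(y)-my$, localize to $[0,\varepsilon_\delta]$ via Lemma \ref{lem:eps1}, bound the error pointwise by $\delta y$, pull $\delta$ out in front of the $y$-weighted integral, and let $\delta\to 0$ at the end. You also correctly identify the point the paper glosses over entirely: discarding the factor $e^{(\frac{c^2}{2}-r)d(y)}$ is too crude (the resulting $c^2\int_0^\varepsilon y e^{cy}\d y$ diverges), so the argument needs
\[
\limsup_{c\to\infty}\, c^2\int_0^{\varepsilon_\delta} y\, e^{\left(\frac{c^2}{2}-r\right)d(y)+cy}\d y \;<\;\infty .
\]

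However, your justification of this boundedness is circular as written: you propose to extract it from ``Lemma \ref{lem:eps2}'s right-hand side being the convergent limit already established for the linear profile,'' but no convergence for the integrand $my$ is established prior to this lemma --- that is precisely what the lemma is meant to deliver. What \emph{is} established beforehand (from $\lim_{c\to\infty}-c^2\mathcal{L}\tilde h(c)=m$ together with \eqref{eq:10}) is that the $\tilde h$-weighted limit equals $m$. The non-circular repair runs the comparison the other way: for $\delta<m$ one has $\tilde h(y)\geq (m-\delta)y$ on $[0,\varepsilon_\delta]$, hence
\[
c^2\int_0^{\varepsilon_\delta} y\, e^{\left(\frac{c^2}{2}-r\right)d(y)+cy}\d y
\;\leq\; \frac{1}{m-\delta}\, c^2\int_0^{\varepsilon_\delta} \tilde h(y)\, e^{\left(\frac{c^2}{2}-r\right)d(y)+cy}\d y ,
\]
and the $\limsup$ of the right-hand side is $\frac{m}{m-\delta}$ by Lemma \ref{lem:eps1} and the known $\tilde h$-limit. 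The error term is then at most $\delta\,\frac{m}{m-\delta}\to 0$. With this single substitution your argument closes; it is worth noting that the paper's own proof leaves exactly this step implicit, so your proposal, once repaired, is in fact more complete than the published one.
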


\begin{proof}
    For all $\delta >0$ there exists $\varepsilon_{\delta} >0$ such that for all $y\in[0,\varepsilon_{\delta}]$ we have
        \[|h(y)-my|<\delta y.\]
    It follows with Lemma \ref{lem:eps1} that
    \begin{align*}
        &\left|\lim_{c\to\infty}c^2\int_{0}^{b_\infty} e^{\left(\frac{c^{2}}{2} - r\right)d(y) + cy}\tilde h(y) \d y
        -\lim_{c\to\infty}c^2\int_{0}^{b_{\infty}} e^{\left(\frac{c^{2}}{2} - r\right)d(y) + cy}m y \d y\right|\\
        & \quad=  \left|\lim_{c\to\infty}c^2\int_{0}^{\varepsilon_{\delta}} e^{\left(\frac{c^{2}}{2} - r\right)d(y) + cy}\left(\tilde h(y)-my\right) \d y\right|\\
        & \quad\leq \left|\lim_{c\to\infty}c^2\int_{0}^{\varepsilon_{\delta}} e^{\left(\frac{c^{2}}{2} - r\right)d(y) + cy}\delta y \d y\right|
        \overset{\delta \to 0}\to 0.
    \end{align*}
\end{proof}

\noindent We can now state the main theorems of this section.

\begin{theorem}\label{cor:lim}
    If $\lim_{x\searrow 0} \frac{d(x)}{x^2}$ exists in $\bar \rr = [-\infty,\infty]$, then
        \[\lim_{x\searrow 0} \frac{d(x)}{x^2} = -B_1,\]
    i.e.,
        $d(x) = -B_1x^2 + o(x^2)$, 
    where
        $B_1 \approx 2.4503$
    is the unique solution to 
    \[1 = \int_{0}^{\infty} z e^{-B_1\frac{z^2}{2} + z}\d z.\]
\end{theorem}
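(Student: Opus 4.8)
The plan is to reduce the right-hand side of \eqref{eq:10} to an explicit Gaussian-type integral by rescaling, and then to identify the exponential rate through a dominated-convergence argument. First I would record the well-definedness of $B_1$: writing $F(B):=\int_0^\infty z\,e^{-Bz^2/2+z}\,\d z$, one checks that $F$ is continuous and strictly decreasing on $(0,\infty)$, with $F(B)\uparrow\int_0^\infty z e^{z}\,\d z=\infty$ as $B\searrow0$ and $F(B)\to0$ as $B\to\infty$. Hence $F(B)=1$ has a unique solution $B_1\in(0,\infty)$, and a numerical evaluation gives $B_1\approx2.4503$.

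Next, combining $\lim_{c\to\infty}-c^2\mathcal L\tilde h(c)=m$ with \eqref{eq:10} and Lemma \ref{lem:eps2}, and dividing by $m>0$, I obtain
\[
1=\lim_{c\to\infty}c^2\int_0^{b_\infty}e^{\left(\frac{c^2}{2}-r\right)d(y)+cy}\,y\,\d y .
\]
By Lemma \ref{lem:eps1} I may replace $b_\infty$ by an arbitrary $\varepsilon>0$. The key move is the substitution $y=z/c$, under which the factor $c^2$, the $y$, and the $\d y$ combine so that
\[
c^2\int_0^{\varepsilon}e^{\left(\frac{c^2}{2}-r\right)d(y)+cy}\,y\,\d y=\int_0^{c\varepsilon}e^{\left(\frac{c^2}{2}-r\right)d(z/c)+z}\,z\,\d z .
\]
Writing $-B:=\lim_{x\searrow0}d(x)/x^2\in[-\infty,0]$, which exists by hypothesis, one has for fixed $z$ the identity $\left(\frac{c^2}{2}-r\right)d(z/c)=\left(\tfrac12-\tfrac{r}{c^2}\right)z^2\cdot\frac{d(z/c)}{(z/c)^2}\to-\frac{B}{2}z^2$, so the integrand converges pointwise to $z\,e^{-Bz^2/2+z}$.

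The heart of the proof is the passage to the limit under the integral, which is also the main obstacle. Assume first $B\in(0,\infty)$. Fix $\eta\in(0,B)$; by definition of the limit there is $\varepsilon>0$ with $d(x)\le-(B-\eta)x^2$ for $x\in(0,\varepsilon]$, whence for $c$ large enough that $\tfrac12-\tfrac{r}{c^2}\ge\tfrac14$ the integrand is dominated on $[0,c\varepsilon]$ by the integrable function $z\mapsto z\,e^{-(B-\eta)z^2/4+z}$. Dominated convergence then yields $1=F(B)$, so $B=B_1$. It remains to exclude the two degenerate cases permitted by the hypothesis. If $B=\infty$, then for every $M>0$ one has $d(x)\le-Mx^2$ near $0$, so the same domination bounds the limit superior of the integral by $F(M)$; letting $M\to\infty$ forces the limit to be $0$, contradicting the value $1$. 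If $B=0$, then the integrand converges pointwise to $z\,e^{z}$ and Fatou's lemma forces the limit to be at least $\int_0^\infty z\,e^{z}\,\d z=\infty$, again a contradiction. Hence $B=B_1\in(0,\infty)$, i.e.\ $\lim_{x\searrow0}d(x)/x^2=-B_1$ and $d(x)=-B_1x^2+o(x^2)$ as claimed.
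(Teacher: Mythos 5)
Your proof is correct and takes essentially the same route as the paper: localization via Lemmas \ref{lem:eps1} and \ref{lem:eps2}, the substitution $z=cy$, comparison bounds ruling out the degenerate cases $B=0$ and $B=\infty$, and identification of $B=B_1$ from $F(B)=1$. If anything, yours is slightly more careful than the paper's own argument, since you make explicit the dominated-convergence and Fatou justifications for passing the limit inside the integral and the strict monotonicity of $F$ that guarantees uniqueness of $B_1$, points the paper leaves implicit.
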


\begin{proof}
    Assume that $\lim_{x\searrow 0} \frac{d(x)}{x^2}$ exists in $\bar \rr$. We first show that $d$ goes to 0 in quadratic order. Let us assume that $d$ is of lower order, i.e.,
        \[\lim_{x\searrow 0} \frac{d(x)}{-x^2} = \infty.\]
    For all $M< 0$ there exists $\delta>0$ such that $d(x)<Mx^2$ for all $x\leq \delta$.
    By Lemma \ref{lem:eps1} we have $m = \lim_{c\to\infty} c^2 \int_{0}^{\delta}  e^{\left(\frac{c^{2}}{2} - r\right)d(y) + cy} my\d y$, i.e., 
    \begin{align*}
 1& =\lim_{c\to\infty} c^2 \int_{0}^{\delta} y e^{\left(\frac{c^{2}}{2} - r\right)d(y) + cy} \d y.
    \end{align*}
    We substitute $z = cy$ and obtain
    \begin{equation}\label{eq:z2}
     \begin{split}
        1 &= \lim_{c\to\infty} c \int_{0}^{c\delta} \frac{z}{c} e^{\left(\frac{c^{2}}{2} - r\right)d\left(\frac{z}{c}\right) + z}\d z  
            = \lim_{c\to\infty} \int_{0}^{c\delta} z e^{\left(\frac{c^{2}}{2} - r\right)d\left(\frac{z}{c}\right) + z}\d z\\
        & \leq \lim_{c\to\infty} \int_{0}^{c\delta} z e^{\left(\frac{c^{2}}{2} - r\right)M\frac{z^2}{c^2}+ z}\d z 
        =\lim_{c\to\infty} \int_{0}^{c\delta} z e^{M\frac{z^2}{2}+ z}\d z
        =:F(M).
     \end{split}
    \end{equation}
    We see that $F(M)\to 0$ as $M\to -\infty$. This is a contradiction, since $M$ can be chosen arbitrarily. 

    Let us now assume that $d$ is of higher order than $x^2$, i.e.,
    \[\lim_{x\searrow 0} \frac{d(x)}{-x^2} = 0.\]
    For all $M < 0$ there exists $\delta'>0$ such that $d(x)>Mx^2$ for all $x\leq \delta'$. Note that $d$ is non-positive.
    From \eqref{eq:z2} we see that
    \begin{align}
        1 &= \lim_{c\to\infty} \int_{0}^{c\delta'} z e^{\left(\frac{c^{2}}{2} - r\right)d\left(\frac{z}{c}\right) + z}\d z
         \geq \lim_{c\to\infty} \int_{0}^{c\delta'} z e^{\left(\frac{c^{2}}{2} - r\right)M\frac{z^2}{c^2}+ z}\d z\nonumber \\
        &=\lim_{c\to\infty} \int_{0}^{c\delta'} z e^{M\frac{z^2}{2}+ z}\d z
        =:\tilde F(M).\nonumber
    \end{align}
    We see that $\tilde F(M)\to \infty$ as $M\to 0$. This is a contradiction, since $M$ can be chosen arbitrarily.
    We have shown that $d(x) = -Bx^2 + o(x^2)$ and want to calculate $B$. From \eqref{eq:z2} we have
    \begin{align}
        1 &= \lim_{c\to\infty} \int_{0}^{c\delta'} z e^{\left(\frac{c^{2}}{2} - r\right)d\left(\frac{z}{c}\right) + z}\d z = \lim_{c\to\infty} \int_{0}^{c\delta'} z e^{\left(-\frac{c^{2}}{2} - r\right)\left(B\frac{z^2}{c^2} + o(\frac{z^2}{c^2})\right) + z}\d z\nonumber\\
        &= \int_{0}^{\infty} z e^{-B\frac{z^2}{2} + z}\d z.\nonumber
    \end{align}
    Since the integral is finite, we can solve it for $B$ and obtain
       $B = B_1 \approx 2.4503$.
\end{proof}

\noindent The following theorem generalizes Theorem \ref{cor:lim} and does not assume the existence of $\lim_{x\searrow 0} \frac{d(x)}{x^2}$.

\begin{theorem}\label{th:d1}
    It holds that 
\begin{align*}
	-B_1\leq\limsup_{x\searrow 0} \frac{d(x)}{x^2}< 0\mbox{ and } -\infty<\liminf_{x\searrow 0} \frac{d(x)}{x^2}\leq -B_1\end{align*}
    where $B_1\approx 2.4503$ is defined above.
\end{theorem}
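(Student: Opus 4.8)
The plan is to run everything off the reduced identity obtained in the proof of Theorem~\ref{cor:lim}: by Lemmas~\ref{lem:eps1} and~\ref{lem:eps2} one has, for every fixed $\delta\in(0,b_\infty)$,
\[
  1=\lim_{c\to\infty}\int_0^{c\delta} z\, e^{\left(\frac{c^2}{2}-r\right)d(z/c)+z}\,\d z ,
\]
which is the first line of~\eqref{eq:z2} and holds without assuming that $d(x)/x^2$ converges. The comparison is made against $F(M)=\int_0^\infty z\,e^{Mz^2/2+z}\,\d z$ from~\eqref{eq:z2}, which is continuous and strictly increasing on $(-\infty,0)$ with $F(M)\to 0$ as $M\to-\infty$, $F(M)\to\infty$ as $M\to 0^-$, and $F(-B_1)=1$. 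The two outer estimates $\limsup_{x\searrow 0}d(x)/x^2\geq -B_1$ and $\liminf_{x\searrow 0}d(x)/x^2\leq -B_1$ are then immediate. Indeed, if $\limsup_{x\searrow 0}d(x)/x^2<-B_1$ I may pick $M$ with $\limsup_{x\searrow 0}d(x)/x^2<M<-B_1$ and $\delta>0$ such that $d(x)<Mx^2$ on $(0,\delta]$; bounding the integrand from above and letting $c\to\infty$ gives $1\leq F(M)<F(-B_1)=1$. Dually, $\liminf_{x\searrow 0}d(x)/x^2>-B_1$ lets me choose $M\in(-B_1,0)$ with $M<\liminf_{x\searrow 0}d(x)/x^2$ and $d(x)>Mx^2$ on $(0,\delta]$, yielding $1\geq F(M)>1$. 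Both are contradictions.

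For $\limsup_{x\searrow 0}d(x)/x^2<0$ I use the monotonicity of $d$. Since $d\leq 0$ the limsup is at most $0$, so only the value $0$ has to be excluded. Assuming $\limsup_{x\searrow 0}d(x)/x^2=0$, choose $x_k\searrow 0$ with $d(x_k)/x_k^2\to 0$ and, for a fixed $t>0$, set $c_k=t/x_k\to\infty$. For $z\in[0,t]$ we have $z/c_k\leq x_k$, so $d(z/c_k)\geq d(x_k)$ because $d$ is non-increasing, whence
\[
  \int_0^{c_k\delta} z\, e^{\left(\frac{c_k^2}{2}-r\right)d(z/c_k)+z}\,\d z
  \;\geq\; e^{\left(\frac{c_k^2}{2}-r\right)d(x_k)}\int_0^{t} z\,e^{z}\,\d z .
\]
Because $\bigl(\tfrac{c_k^2}{2}-r\bigr)d(x_k)=\tfrac{t^2}{2}\,\tfrac{d(x_k)}{x_k^2}-r\,d(x_k)\to 0$, letting $k\to\infty$ gives $1\geq\int_0^t z\,e^{z}\,\d z$ for every $t>0$, which is absurd since $\int_0^t z\,e^z\,\d z\to\infty$. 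Hence $\limsup_{x\searrow 0}d(x)/x^2<0$.

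The remaining bound $\liminf_{x\searrow 0}d(x)/x^2>-\infty$ is the main obstacle, and it is genuinely harder than the other three. The reason is that its negation, $\liminf_{x\searrow 0}d(x)/x^2=-\infty$, provides only a sequence $x_k\searrow 0$ along which $-d(x_k)/x_k^2\to\infty$ — purely subsequential information — in sharp contrast to the \emph{lower-order} case of Theorem~\ref{cor:lim}, where $d(x)<Mx^2$ held on an entire neighbourhood and forced $1\leq F(M)\to 0$. The natural strategy, dual to the previous paragraph, is to take such $x_k$, choose a probing sequence $c_k$ matched to the scale $x_k$, split the integral at $z=c_kx_k$, and show that the whole integral tends to $0$, contradicting its convergence to $1$. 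On the tail $\{z\geq c_kx_k\}$, i.e.\ $\{z/c_k\geq x_k\}$, the monotonicity $d(z/c_k)\leq d(x_k)$ suppresses the integrand; on the head $\{z\leq c_kx_k\}$ the factor $z$ damps the smallest scales (note $\int_0^1 z\,e^z\,\d z=1$). The crux is that on the head monotonicity gives only $d\geq d(x_k)$, so $d$ may be far less negative at the smaller scales the head samples, while the crude one-point tail bound $d\leq d(x_k)$ is sharp exactly when $d$ is essentially flat just above $x_k$ — the worst case. I expect to handle this by replacing the crude tail bound with an integrated (dyadic) use of monotonicity and, if necessary, by passing from $x_k$ to the first scale at which $d$ attains a fixed fraction of $|d(x_k)|$, so as to force $\int_0^{c_k\delta} z\,e^{(\frac{c_k^2}{2}-r)d(z/c_k)+z}\,\d z\to 0$. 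Combining the four estimates yields the asserted two-sided statement.
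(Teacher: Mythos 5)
Three of your four inequalities are established correctly, and in the same spirit as the paper's own (unprinted) argument: the paper only remarks that the proof of Theorem~\ref{th:d1} is a refined version of that of Theorem~\ref{cor:lim} and defers details to \cite{phd_simon}, and your refinement is exactly of that kind. Starting from the identity $1=\lim_{c\to\infty}\int_0^{c\delta} z e^{(\frac{c^2}{2}-r)d(z/c)+z}\,\d z$, which indeed holds for every fixed $\delta\in(0,b_\infty)$ without any convergence assumption on $d(x)/x^2$, your two $F(M)$-comparisons give $\limsup_{x\searrow 0}d(x)/x^2\geq -B_1$ and $\liminf_{x\searrow 0}d(x)/x^2\leq -B_1$ (one small point: $(\frac{c^2}{2}-r)\frac{Mz^2}{c^2}=\frac{Mz^2}{2}+\frac{r|M|z^2}{c^2}$ for $M<0$, so you need dominated convergence with the majorant $ze^{Mz^2/2+r|M|\delta^2+z}$ to identify the limit as exactly $F(M)$; the paper glosses over the same point in Theorem~\ref{cor:lim}), and your subsequential probe $c_k=t/x_k$ cleanly rules out $\limsup_{x\searrow 0}d(x)/x^2=0$.

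The fourth bound, $\liminf_{x\searrow 0}d(x)/x^2>-\infty$, is a genuine gap: what you offer is a plan plus an honest admission that the natural two-region split fails (your diagnosis is accurate -- the one-point tail bound $d(z/c)\leq d(x_k)$ is too weak when $|d(x_k)|/x_k$ stays bounded), and ``I expect to handle this by a dyadic refinement'' is not a proof. The difficulty disappears, however, if you feed your own part on $\limsup<0$ back in as input for a \emph{three}-region split. By that part there are $\eta,\delta'>0$ with $d(x)\leq-\eta x^2$ on $(0,\delta']$. Suppose $x_k\searrow 0$ with $\rho_k:=-d(x_k)/x_k^2\to\infty$, fix $a>0$ small and $\Lambda>0$ large, and put $c_k=a/x_k$. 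On $[0,a]$, since $d\leq 0$, the integrand is at most $ze^z$, so this piece is at most $\tfrac{a^2}{2}e^{a}$. On $[a,\Lambda]$, $z/c_k\geq x_k$ and monotonicity give $d(z/c_k)\leq d(x_k)$, so the integrand is at most $ze^{-\rho_k(a^2/2-rx_k^2)+z}$, and this piece tends to $0$ as $k\to\infty$. On $[\Lambda,c_k\delta']$, the bound $d(z/c_k)\leq-\eta z^2/c_k^2$ gives the integrand bound $ze^{-\eta z^2/2+\eta r\delta'^2+z}$, so this piece is at most $T(\Lambda):=e^{\eta r\delta'^2}\int_\Lambda^\infty ze^{-\eta z^2/2+z}\,\d z$, uniformly in $k$. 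Choosing $a$ and $\Lambda$ with $\tfrac{a^2}{2}e^a+T(\Lambda)<\tfrac12$ makes the whole integral eventually smaller than $\tfrac34$, contradicting the identity above along $c_k\to\infty$. With this (or an equivalent) completion your proposal proves the theorem; as written, it proves only three of the four stated inequalities.
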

The proof of the theorem is basically a refined version of the proof of Theorem \ref{cor:lim}. It can be found in \cite{phd_simon}.
 
\begin{remark}
    We note that $B$ does not depend on $m$ or $r$, i.e., it is a universal constant for stopping problems with the same order of $\tilde h$ in 0.
    This is however not surprising. Multiplying the payoff function $g$ by a constant does not affect $C$, so $m$ should not play a role. We could achieve a change of $r$ by Brownian scaling, i.e., scaling $t$ by a factor $a$ and $y$ by $\sqrt{a}$ and multiplying $g$ by a constant. A quadratic function such as $d$ is invariant to that.
\end{remark}
 
\begin{remark}\label{rem:nonlin}
    If $\tilde h$ is not linear in 0 but rather
        \[\tilde h(x) = \begin{cases}
                    m x^{\beta} + o(x^{\beta}) & \mbox{if }x\geq 0\\
                    -m' |x|^{\beta} + o(|x|^{\beta})& \mbox{if }x< 0
                    \end{cases}
        \]
    for $m,m',\beta>0$, then the same procedure as above yields
        \[\frac{m'}{m}\Gamma(\beta + 1) = \int_{0}^{\infty} z^{\beta} e^{- B_{\beta}\frac{z^2}{2} + z}\d z\]
    which can be solved for $ B_{\beta}$. For $m' = m$, some values are:
    $B_0 \approx 3.9084$, $B_{\frac{1}{2}} \approx 3.0133$, $B_2 \approx 1.7814$ and $B_3 \approx 1.3984$.
    With the implicit function theorem we see that $B_\beta$ is continuous as a function of $\beta$, hence other choices for $\tilde h$ lead to the same limit behavior, e.g., $\tilde h(x) = \frac{x}{\log(|x|)}$ yields the same limit behavior for $C_t$ as $\tilde h(x) = x$.
\end{remark}
 
\begin{remark}
    If we change our perspective again and state the result for the function $b = d^{-1}$ we get (if the limit exists)
        $b(t) = \alpha \sqrt{t} + o(\sqrt{t})$
    for $t\to 0$, where
        $\alpha = \frac{1}{\sqrt{B_1}} \approx 0.6388$
    is the unique solution to $\alpha^3 \Phi(\alpha) = (1-\alpha^2)\varphi(\alpha)$. This $\alpha$ is the same as in Example \ref{ex:stadje}, which is not surprising since the example matches our setting. The same limit behavior was earlier derived for the exercise boundary of an American put with large continuous dividend payments. We treat that case in the following Example \ref{ex:option}. For related problems, see \cite{wilmott98}, \cite{lamberton03} and \cite{lai05}.
\end{remark}
 
\begin{example}[American put with large dividend]\label{ex:option}
    We consider an American put option in the Black-Scholes model with interest rate $r$ and continuously paid dividend rate $q$. The problem can be reduced to the following \emph{canonical} optimal stopping problem 
    (see \cite{lai99})
        \[V(t,x) = \sup_{t\leq\tau\leq0}\e_{(t,x)}[g(\tau,W_{\tau})]\]
    where 
        $g(t,x) = e^{-\rho t}\left(1-e^{x + (\rho - \theta \rho -\frac{1}{2})t}\right)^{+}$
    with $\rho = \frac{r}{\sigma^2}$ and $\theta = \frac{q}{r}$. We consider the case $q>r$, i.e., $\theta<1$. For $x < 0$ we have
    \begin{equation*}\label{eq:put}
        -\g g(t,x) = e^{-\rho t}\left(\rho - \theta \rho e^{x + (\rho - \theta \rho -\frac{1}{2})t}\right).
    \end{equation*}
    $\g g(0,x)$ has a unique negative root in $x_0 = \log(\frac{1}{\theta}) = \log(\frac{r}{q})$ and $-\g g(0,x)\leq 0$ for $x \geq x_0$, so we have
        $C_0 = \left(\log\left(\frac{r}{q}\right),\infty\right).$
    By the variable transformation $z = x + \log\left(\frac{r}{q}\right) + (\rho - \theta \rho -\frac{1}{2})t$ we get the stopping problem 
        \[V(t,z) = \sup_{t\leq\tau\leq0}\e_{(t,z)}[e^{-\rho \tau} h(X_{\tau})]\]
    where $h(z) = (\rho - \rho e^{z})^{+}$ and $X$ is a Brownian motion with drift. Keeping Remark \ref{rem:drift} in mind, the problem now matches the assumptions of Theorem \ref{th:d1}, so we see that the continuation set of the \emph{canonical} problem is $C = \{(t,x)\mid t<d(x)\}$ with
        \[ \lim_{x\nearrow \log\left(\frac{r}{q}\right)} \frac{d(x)}{x^2} = -B.\]

    This result matches the limit behavior derived for the American put (and call) in \cite{lamberton03}. Note that for $q\leq r$ Theorem \ref{th:d1} is no longer directly applicable since $g(0,x)$ has a kink in 0, i.e., right at the boundary of $C_0$, so $\tilde h$ has a pointmass in 0 and is not asymptotically symmetric in 0. It seems possible to use the methods described above to analyze these cases as well. We, however, run into more technicalities.
\end{example}

\subsection{The multi-dimensional case}\label{ssec:n}
We now consider the $n$-dimensional case. We will not give rigorous proofs, but explain rather heuristically how the methods can be extended.

We change the notation slightly. For notational convenience, we substitute the parameter $\textbf{c}$ by $a\textbf{c}$, where $\textbf{c}\in S^{n-1}$ and $a\in \rr^{+}$. Our equation now reads
\begin{equation}\label{eq:a}
 -\int_{C_0}  e^{a \textbf{c}\cdot \textbf{y}}\tilde h(\textbf{y})\d \textbf{y}
     =\int_{C_{\infty}\setminus C_0} e^{\left(\frac{a^{2}}{2} - r\right)d(\textbf{y}) + a\textbf{c}\cdot \textbf{y}}\tilde h(\textbf{y}) \d \textbf{y}.
\end{equation}
We assume for now that $C_0$ is strictly convex. Then for every $\textbf{c}$ for which $\{a\textbf{c}\mid a\in\rr^{+}\}\cap C_\infty$ is bounded, there is a unique point $\textbf{x}_c\in\partial C_0$ realizing $\sup\{\textbf{x}\cdot\textbf{c}\mid\textbf{x}\in \bar C_0\}$. An illustration can be found in Figure  \ref{fig:transf1}.
The underlying stopping problem is invariant under rotation and translation, so we can reset the coordinate system such that $\textbf{x}_c$ lies in the origin and $\textbf{c} =(1,0,\ldots,0)$, see Figure \ref{fig:transf2}.
\begin{figure}
\centering
\begin{subfigure}{.5\textwidth}
  \centering
  \includegraphics[width=.9\linewidth]{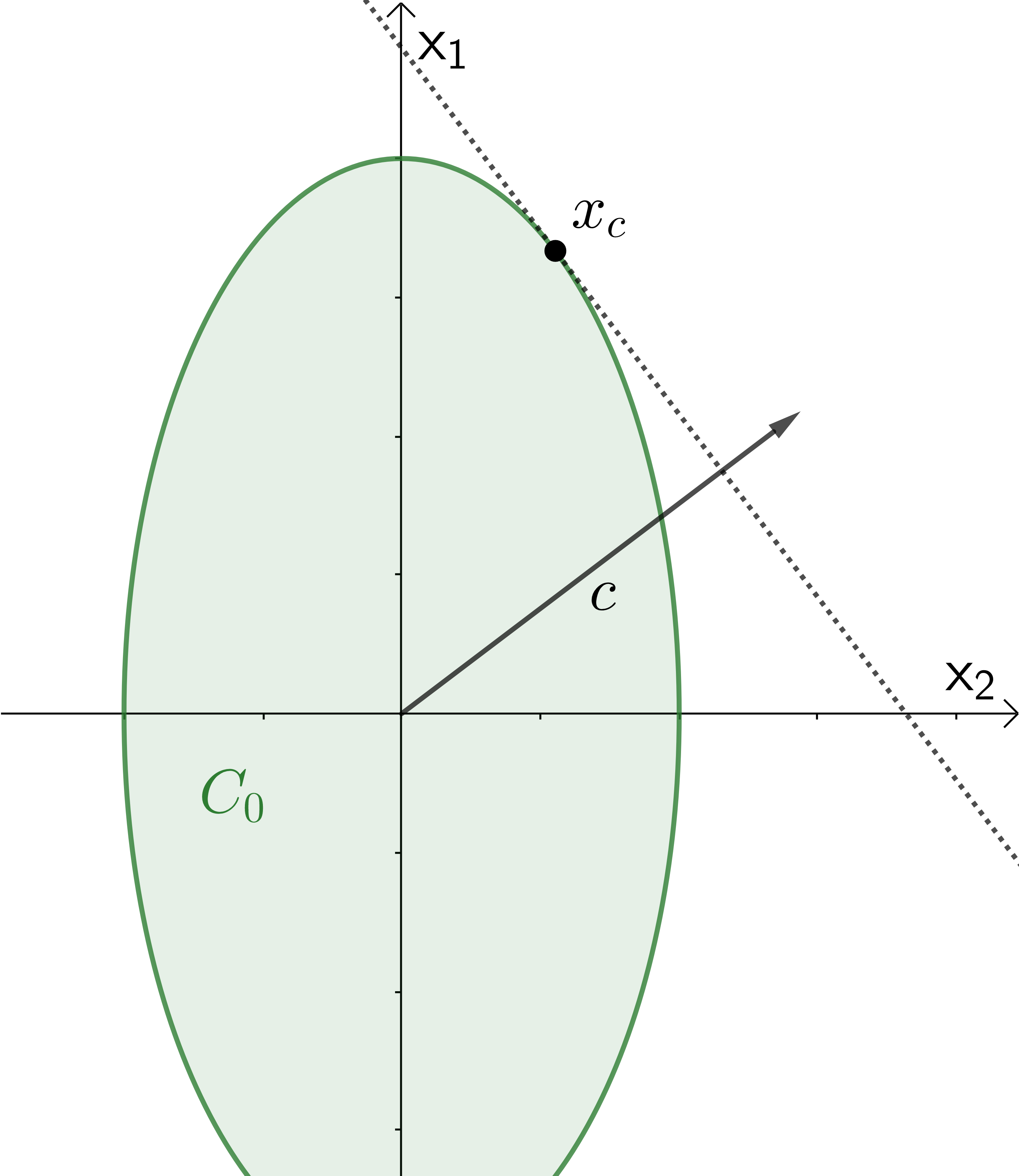}
  \caption{Original problem}
  \label{fig:transf1}
\end{subfigure}%
\begin{subfigure}{.5\textwidth}
  \centering
  \includegraphics[width=.9\linewidth]{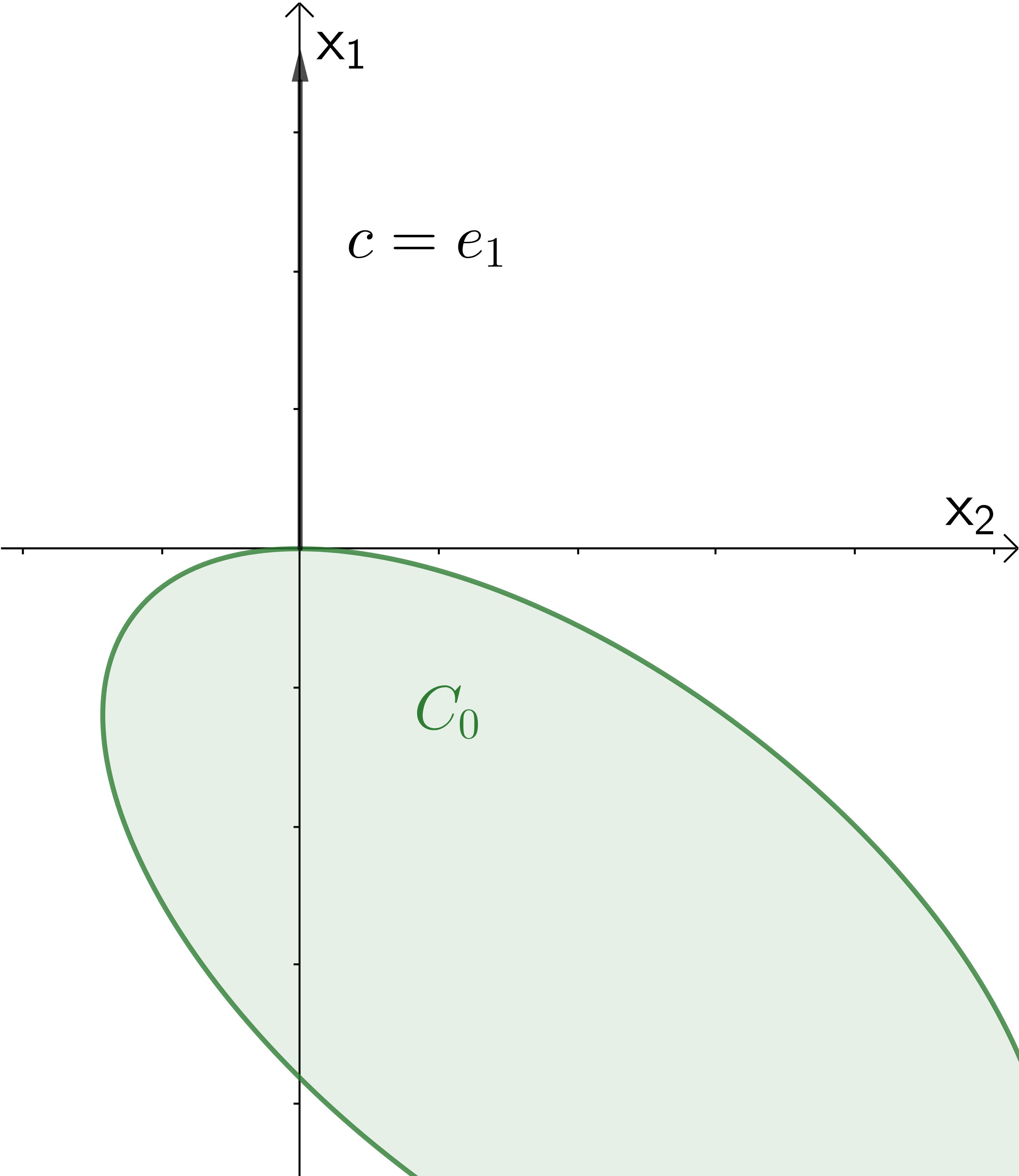}
  \caption{Rotated and translated}
  \label{fig:transf2}
\end{subfigure}
\caption{The setting in the two-dimensional case}
\label{fig:test}
\end{figure}
Now \eqref{eq:a} reads
\begin{equation}\label{eq:a2}
    -\int_{C_0}  e^{a y_1}\tilde h(\textbf{y})\d \textbf{y}
    =\int_{C_{\infty}\setminus C_0} e^{\left(\frac{a^{2}}{2} - r\right)d(\textbf{y}) + ay_1}\tilde h(\textbf{y}) \d \textbf{y}.
\end{equation}
We assume that $\partial C_0$ has positive curvature, so it can be approximated in a neighborhood of 0 by $y_1 = -\sum_{i=2}^{n}\alpha_i y_i^2$ with constants $\alpha_i >0$. The constructions are similar and lead to the same result if we consider different approximations such as a (piecewise) constant boundary, corners or $x_1 \approx -\sum\alpha_i \operatorname{sgn}(x_i)|x_i|^{p}$, $p\in \rr_{\geq 0}$, in general.

\paragraph{Two-dimensional}
For simplicity let us look at the case $n=2$. We set $\gamma = \gamma_2$ and calculate the limit of the left-hand side of \eqref{eq:a2}:
\begin{align*}
    &\lim_{a\to \infty}-a^{\frac{5}{2}}\int_{C_0}  e^{a y_1}\tilde h(\textbf{y})\d \textbf{y}\\
    &\quad = \lim_{a\to \infty}- a^{\frac{5}{2}}\int_{-\infty}^{0} e^{a y_1} 
    \int_{-\sqrt{\frac{y_1}{\gamma}}}^{\sqrt{\frac{y_1}{\gamma}}} m(y_1-\gamma y_2^2)\d y_2 \d y_1
    =  m \sqrt{\frac{\pi}{\gamma}}
\end{align*}
For the right-hand side of \eqref{eq:a2} we see that only neighborhoods of 0 are asymptotically relevant. It follows -- substituting $z_1 = a y_1$ and $z_2 = \sqrt{a} y_2$ and setting  set $d(x_1,x_2) = -B(x_1+\gamma x_2^2)^2$  -- that
\begin{align*}
    & \lim_{a\to \infty}a^{\frac{5}{2}}\int_{C_{\infty}\setminus C_0} e^{\left(\frac{a^{2}}{2} - r\right)d(\textbf{y}) + ay_1}\tilde h(\textbf{y}) \d \textbf{y} \\
    &\quad = \lim_{a\to \infty}a^{\frac{5}{2}}\int_{-\infty}^{\infty} \int_{-\gamma y_2^2}^{\infty}  m(y_1+\gamma y_2^2) e^{\left(\frac{a^{2}}{2} - r\right)d(\textbf{y}) + ay_1} \d y_1 \d y_2\\
    &\quad = \lim_{a\to \infty}-\int_{-\infty}^{\infty} \int_{-a\gamma z_2^2}^{\infty}  m(z_1+\gamma z_2^2) e^{\left(\frac{a^{2}}{2} - r\right)d\left(\frac{z_1}{a},\frac{z_2}{\sqrt{a}}\right) + z_1} \d z_1 \d z_2\\
    &\quad =\lim_{a\to \infty}\int_{-\infty}^{\infty} \int_{0}^{\infty}  m z_1 e^{\left(\frac{a^{2}}{2} - r\right)\frac{-B}{a^2}z_1^2 + z_1 - \gamma z_2^2} \d z_1 \d z_2\\
    &\quad = \int_{-\infty}^{\infty} \int_{0}^{\infty}  m z_1 e^{\frac{-B}{2}z_1^2 + z_1 - \gamma z_2^2} \d z_1 \d z_2\\
    &\quad =m\sqrt{\frac{\pi}{\gamma}}\int_{0}^{\infty}z_1 e^{\frac{-B}{2}z_1^2 + z_1}\d z_1.
\end{align*}
Comparing the two sides we get
\begin{align*}
    m \sqrt{\frac{\pi}{\gamma}} & = m \sqrt{\frac{\pi}{\gamma}}\int_{0}^{\infty}z_1 e^{\frac{-B}{2}z_1^2 + z_1}\d z_1
    \mbox{, i.e., } \quad 1  = \int_{0}^{\infty}z_1 e^{\frac{-B}{2}z_1^2 + z_1}\d z_1. 
\end{align*} 
This is the same equation as in the one-dimensional case, which is again solved by $B_1$.

In higher dimensions the method works in exactly the same way.
These results have a nice interpretation that becomes clear when we write it in terms of distance $r= r(\textbf{y}) := \operatorname{dist}(\textbf{y},C_{0})$ to $C_0$. 
Now for $\textbf{y}\notin C_0$ close to $\partial C_0$ we have 
    \[d(\textbf{y}) = -B_1r^2 + o(r^2).\]
So for $t$ close to 0 we have that $C_t$ is approximately $C_0$ inflated by the constant amount
    $\frac{1}{\sqrt{B_1}}\sqrt{t}$
equally on each point.

\subsection{Possible generalizations}
It seems possible that the results on limit behavior are far from the most general setting that can be addressed by our method, so let us briefly discuss how further generalizations might look like:

In Lemma \ref{lem:eps1} and \ref{lem:eps2} we have shown that in the discounted case for \emph{regular} $\tilde h$ only the limit behavior of $\tilde h$ is relevant for the limit behavior of $d$ or $b$. This should hold true for general stopping problems where $\g g$ is sufficiently \emph{regular} close to $\partial C_0$. 

Consider a Brownian motion $X$ with volatility $\sigma^2$ and a stopping problem that matches (except the different driving process $X$) the assumptions of Theorem \ref{cor:lim} and has a stopping boundary $d$. We can scale the problem by $\frac{1}{\sigma}$ to obtain a problem for a standard Brownian motion $W$. For its stopping boundary $\tilde d$ we get $\tilde d(y) = d(\sigma y) = -B_1y^2 + o(y^2)$ in a neighborhood of 0. So we have   
$d(x)  = \frac{-B_1}{\sigma^2}x^2 + o(x^2)$
in a neighborhood of 0. Since only the local behavior is relevant, similar should be true for more general diffusions. 

In the $n$-dimensional setting we assumed that $C_0$ is convex. Since limit behavior for $t\to 0$ is a local phenomenon, it would be surprising if the global shape of $C_0$ was relevant for it. So we expect the described limit behavior to hold for all parts of $C_0$ that are locally convex. It would be surprising as well if the limit behavior for locally concave parts would be fundamentally different, but the Fredholm representations does not seem suitable to analyze these areas.

It is possible to analyze the limit behavior of $C_t$ for $t\to -\infty$ (or limit of $d(y)$ as $y\to b_\infty$ resp.) by looking at $c\searrow \sqrt{2r}$. This leads to an exponential lower bound for $b$. More precisely, for $t\to -\infty$ it is not difficult to show that
$b(t) \geq b_{\infty} - M e^{t},$
for some $M>0$. We omit the details here.

\section{Uniqueness}\label{sec:u}
We show that $C$ is defined uniquely by the Fredholm representation in the one-dimensional and one-sided case described in Subsection \ref{ssec:2}. The integral equation can not have a unique solution for areas where the integrand equals zero, so we have to additionally assume that $\tilde h =  0$ holds true only on a null set.
We list all assumptions that we use for the uniqueness results.
\begin{assumptions}\label{as:1}~
    \begin{itemize}
        \item The payoff function is of the form $e^{-rt}h(x)$
        with $r>0$.
        \item The Volterra integral equation \eqref{eq:pes} holds.
        \item The integral 
        \[\int_{-\infty}^{b_\infty} e^{\left(\frac{c^{2}}{2} - r\right)d(y) + cy}\tilde h(y) \d y \]
        exists.
        \item $\lambda(\{y\mid\tilde h(y) =0\})=0$ where $\lambda$ denotes the Lebesgue measure.
        \item The solution is one-sided, i.e., there exists a bounded function 
            $b: \rr_{-}\to\rr$
        such that $C = \{(t,x)\mid x< b(t)\}$. As above, we set $d = b^{-1}$.
    \end{itemize}
\end{assumptions}

In this setting, we show that \eqref{eq:main} defines $b$ uniquely in the class of continuous and monotone functions.
The ansatz we use is not limited to this special case, in principle it can be used on multidimensional problems in a more general setting. This, however, brings a lot of  technicalities, so we stick to the case described. 
Note that, as pointed out in Section \ref{sec:2}, we do not make any direct assumptions on the differentiability of $h$. We only need that \eqref{eq:pes} holds. For notational convenience, we will state the proofs using $\tilde h$ as a function. The proofs, however, work in the same way if we understand $\tilde h$ as a measure and evaluate integrals of the form $\int_{-\infty}^{b_\infty} e^{\left(\frac{c^{2}}{2} - r\right)d(y) + cy} \d \hat h(y)$.  In this way, standard American options, for example, are also included in the analysis, see \cite{lai99}.

\begin{remark}
    Our proof is inspired by techniques introduced by
    Bruni and Koch in their 1985 work on the identifiability of mixtures of Gaussian densities, see \cite{bruni85}.
    To see the connection, one can interpret the Fredholm representation as a mixture of Gaussian densities. Indeed, we have
        \[e^{cy+\frac{1}{2}c^2s} = A(s,y)\varphi\left(\frac{y}{-s},\frac{1}{-s}\right)(c),\]
    where $\varphi\left(\frac{y}{-s},\frac{1}{-s}\right)$ is the density function of a normal distribution with mean $\frac{y}{-s}$ and variance $\frac{1}{-s}$ and $A(s,y)$ is a positive function not depending on $c$. Bruni and Koch analyze mixtures of the form 
        \[f(c) = \int_{D}\varphi\big(\lambda_1(x),\lambda_2(x)\big)(c)\d \mu(x)\]
    where $D$ is compact, the mean $\lambda_1$ and the variance $\lambda_2$ are $C^1$ on $D$ and bounded from above, $\lambda_2$ is additionally bounded away from 0. Under some additional assumptions they show that for given $f$ the measure $\mu$ is defined uniquely by the equation, i.e., in statistical terms, it is identifiable. They state that compactness of $D$ is essential for their result. In our setting, however, the area of integration is not compact and mean and variance are not bounded, so we cannot directly use their results. Theorem \ref{th:eind} can be viewed as a result about identifiability in a special case of a non-compact $D$ and unbounded $\lambda_1$ and $\lambda_2$.
    We will keep in mind that we work with Gaussian mixtures with variable $c$, but we stick to the notation $e^{cy+\frac{1}{2}c^2s}$.
\end{remark}

\noindent The proof consists of the following steps:
\begin{enumerate}
    \item Show that the area with the highest variances, i.e., the largest $t$, govern the limits $c\to\infty$, see Lemma \ref{lem:lim}.
    \item Use 1.\ to show that, if the Fredholm representation holds true for two different functions $b$ and $b'$, then this still holds true if the integrand is multiplied by a polynomial $q(s,y)$, see Lemma \ref{lem:1}.
    \item Show that the polynomials (in fact we will use Laguerre exponential polynomials) lie dense in $L^{2}(\rr^{2})$, see Theorem \ref{th:eind}.
    \item Represent $b$ and $b'$ as part of measures $\mu$ and $\mu'$ on $\rr^{2}$ and use 2.\ and 3.\ to show that $\mu = \mu'$, see Theorem \ref{th:eind}.
\end{enumerate}

We recall that \eqref{eq:main} can be written as
\begin{equation}\label{eq:main22}
     -\frac{1}{\frac{c^2}{2}-r} \mathcal{L}\tilde h (c)
     = \int_{-\infty}^{0}\int_{0}^{b(s)}e^{cy+\frac{1}{2}c^2s-rs}\tilde h(y)\d y \d s. 
\end{equation}
The integrand on the right-hand side is non-negative, the left-hand side is the known function $f(c)$. 
We want to show uniqueness of $b$ in the class of positive continuous and monotone functions. 
A short notation will come very handy for the latter proofs, so for 
 measurable $J\subset \rr_{\leq 0}$ and $n,m\in\nn_{0}$ we set
\begin{align*}
    & I^{b}_{J}(n,m)(c) := \int_{J}\int_{0}^{b(s)}(-s +1)^{n}y^{m}e^{cy+\frac{1}{2}c^2s-rs}\tilde h(y)\d y \d s,\\
    & I^{b}(n,m)(c): = I^{b}_{(-\infty,0]}(n,m)(c),\\
    & I^{b}_{J}(c): = I^{b}_{J}(0,0)(c),
\end{align*}
and for two different continuous functions $b$ and $b'$ we define
\begin{align*}
   & D_{J}(n,m)(c) := I^{b}_{J}(n,m)(c) - I^{b'}_{J}(n,m)(c)\\ 
   & D(n,m)(c) := D_{(-\infty,0]}(n,m)(c).
\end{align*}

The following lemma states that only the parts with the largest variance, i.e., the largest $t$ values in our setting, play a role for the limits $c\to \infty$.

\begin{lemma}\label{lem:lim} 
    Let $t\leq 0$ be fixed and $\varepsilon >0$, then
        \[\lim_{c\to \infty}\frac{I_{(-\infty, t]}^{b}(n,m)(c)}{I^{b}_{[t-\varepsilon,t]}(n,m)(c)} = 1.\]
    If additionally $b(t) \neq b'(t)$, then
        \[\lim_{c\to \infty}\frac{D_{(-\infty, t]}(n,m)(c)}{D_{[t-\varepsilon,t]}(n,m)(c)} = 1.\]
\end{lemma}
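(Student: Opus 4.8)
The plan is to reduce both claims to one fact: as $c\to\infty$ the kernel $e^{cy+\frac12 c^2 s}$ concentrates, in $s$, at the largest available time (the largest variance $\tfrac{1}{-s}$, i.e. $s$ near $t$) and, in $y$, at the upper endpoint of the inner integral. Since
\[
\frac{I^b_{(-\infty,t]}(n,m)(c)}{I^b_{[t-\varepsilon,t]}(n,m)(c)}
=1+\frac{I^b_{(-\infty,t-\varepsilon]}(n,m)(c)}{I^b_{[t-\varepsilon,t]}(n,m)(c)},
\]
it suffices to show that the \emph{tail slab} over $(-\infty,t-\varepsilon]$ is exponentially negligible against a thin slab near $s=t$; the first assertion then follows, and the second is obtained by the same scheme applied to the signed integrand.

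For the numerator I would use only the boundedness of the stopping boundary. On $(-\infty,t-\varepsilon]$ we have $y\le b(s)\le b_\infty$, hence $e^{cy}\le e^{cb_\infty}$, and the inner integral is at most $e^{cb_\infty}\int_0^{b_\infty}y^m\tilde h(y)\,\d y$; integrating the remaining factor $(-s+1)^n e^{(\frac{c^2}{2}-r)s}$ (which, for $c>\sqrt{2r}$, is integrable on $(-\infty,t-\varepsilon]$ and concentrates at $s=t-\varepsilon$) gives
\[
I^b_{(-\infty,t-\varepsilon]}(n,m)(c)\le P_1(c)\,e^{\,c\,b_\infty+(\frac{c^2}{2}-r)(t-\varepsilon)},
\]
with $P_1$ of at most polynomial growth. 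For the denominator I would keep only a thin slab: pick $\sigma\in(t-\varepsilon,t)$ with $\sigma<0$ (always possible, and this is what handles the degenerate case $b(t)=0$ at $t=0$) and a small $\rho>0$ with $[\sigma-\rho,\sigma]\subset[t-\varepsilon,t]$; on it $b(s)\ge b(\sigma)>0$, so restricting $y$ to $[b(\sigma)-\eta,b(\sigma)]\subset(0,b_\infty)$, where $\int\tilde h\,\d y>0$ by the assumption $\lambda(\{\tilde h=0\})=0$, yields
\[
I^b_{[t-\varepsilon,t]}(n,m)(c)\ge P_2(c)\,e^{\,c(b(\sigma)-\eta)+(\frac{c^2}{2}-r)(\sigma-\rho)},
\]
with $P_2>0$ of at most polynomial growth.

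The decisive point is the comparison of the two exponents. Their difference is
\[
c\big(b_\infty-b(\sigma)+\eta\big)-\Big(\tfrac{c^2}{2}-r\Big)\big(\sigma-\rho-(t-\varepsilon)\big),
\]
and since $\sigma-\rho>t-\varepsilon$ the second bracket is a fixed positive number $\kappa$; because $b$ is \emph{bounded}, $b_\infty-b(\sigma)$ is finite, so the $O(c)$ term is beaten by $-\frac{\kappa}{2}c^2$ and the difference tends to $-\infty$. As $P_1/P_2$ grows only polynomially, the tail quotient tends to $0$, proving the first statement. I want to stress that the boundedness of the boundary enters exactly here, turning a would-be $e^{c(b_\infty-b(\sigma))}$ gain in the numerator into a loss against the $e^{-\frac{\kappa}{2}c^2}$ coming from the time gap.

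For the second statement, write $D_J(n,m)(c)=\int_J(-s+1)^n e^{(\frac{c^2}{2}-r)s}\Delta(s,c)\,\d s$ with $\Delta(s,c):=\int_{b'(s)}^{b(s)}y^m e^{cy}\tilde h(y)\,\d y$, whose sign equals that of $b(s)-b'(s)$. If $b(t)\ne b'(t)$ then, by continuity, $b-b'$ keeps a constant sign on some $[t-\rho',t]$, so there $\Delta$ is sign-definite and the slab estimate above applies verbatim, giving $|D_{[t-\rho',t]}(n,m)(c)|\gtrsim e^{c(b(\sigma)-\eta)+(\frac{c^2}{2}-r)(\sigma-\rho)}$. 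On the complementary part $\Delta$ may change sign, but bounding $|\Delta(s,c)|\le e^{cb_\infty}\int_0^{b_\infty}y^m\tilde h(y)\,\d y$ reproduces the crude tail bound; hence $|D_{[t-\varepsilon,t-\rho']}(n,m)(c)|$ is negligible against $|D_{[t-\rho',t]}(n,m)(c)|$, so $D_{[t-\varepsilon,t]}(n,m)(c)\ge\frac12 D_{[t-\rho',t]}(n,m)(c)$ for large $c$, and the same exponent comparison closes the argument. The main obstacle I anticipate is precisely the denominator lower bound: one must secure a strictly positive, exponentially sharp slab while $\tilde h$ is only positive almost everywhere and the upper endpoint $b(t)$ may vanish (at $t=0$); choosing the auxiliary point $\sigma$ strictly inside $(t-\varepsilon,t)\cap(-\infty,0)$ and invoking $\lambda(\{\tilde h=0\})=0$ on a short interval is what resolves both difficulties.
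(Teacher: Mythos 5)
Your proposal is correct and follows essentially the same route as the paper's proof: the same tail-versus-slab decomposition, the same crude bound $e^{cy}\le e^{cb_\infty}$ on the tail, the same positive slab lower bound near $s=t$, and the same decisive observation that the $\Theta(c^2)$ gap from the time separation beats the $O(c)$ factor coming from the bounded boundary, with the signed case handled by sign-definiteness near $t$ via continuity plus crude bounds elsewhere. The only cosmetic differences (retaining the factor $e^{c(b(\sigma)-\eta)}$ in the slab bound, and invoking $\lambda(\{\tilde h=0\})=0$ instead of strict positivity of $\tilde h$ on an interval) do not change the argument.
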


\begin{proof}
    Let $(c_i)$ be a sequence with $c_i \to \infty$ for $i\to \infty$. We first show that for all $\varepsilon >0$ and $t\leq 0$ it holds
    \begin{equation}\label{eq:i1}
            \lim_{i\to \infty}\frac{I_{(-\infty, t]}^{b}(n,m)(c_i)}{I^{b}_{[t-\varepsilon,t]}(n,m)(c_i)} = 1.
    \end{equation}  
    Heuristically, this means that only the parts with the largest variances (i.e., $\frac{1}{-s}$) governs the limits.
    Since $I^{b}_{(-\infty, t]}(n,m) = I^{b}_{[t-\varepsilon,t]}(n,m) + I^{b}_{(-\infty,t-\varepsilon)}(n,m)$, equation \eqref{eq:i1} is equivalent to
        \[\lim_{i\to \infty}\frac{I^{b}_{(-\infty,t-\varepsilon]}(n,m)(c_i)}{I^{b}_{[t-\varepsilon,t]}(n,m)(c_i)} = 0.\]
    The numerator and denominator are positive. We derive an upper bound for $I^{b}_{(-\infty,t-\varepsilon]}(n,m)(c)$.  Let $h_1 = \max\{\tilde h(y)\mid y\in[0,b_\infty]\}$ then
    \begin{align*}
         I^{b}_{(-\infty,t-\varepsilon]}(n,m)(c) 
         & =\int_{-\infty}^{t-\varepsilon}\int_{0}^{b(s)}(-s+1)^{n}y^{m}e^{cy+\frac{1}{2}c^2s-rs}\tilde h(y)\d y \d s\\
        & \leq \int_{-\infty}^{t-\varepsilon}\int_{0}^{b_{\infty}}(-s+1)^{n}b_\infty^{m}e^{c b_\infty+\frac{1}{2}c^2s-rs}h_1\d y \d s\\ 
        & = N_1 e^{cb_\infty}\int_{-\infty}^{t-\varepsilon}(-s+1)^{n}e^{\frac{1}{2}c^2s-rs}\d s \\
        & = N_1 e^{cb_\infty}\frac{1}{\frac{c^2}{2}-r} P(c,\varepsilon) e^{(\frac{c^2}{2}-r)(t-\varepsilon)}
    \end{align*}
    where $N_1$ is a constant and $P$ is a polynomial in $\frac{1}{c}$ and $\varepsilon$.
    
    We now derive a lower bound for $I^{b}_{[t-\varepsilon,t]}(n,m)(c)$. Let $ 0< a_1 <a_2 \leq b(t-\frac{\varepsilon}{2})$ such that $\tilde h(y)$ is strictly positive on $[a_1,a_2]$ and let $h_2 = \min\{\tilde h(y)\mid y\in[a_1,a_2]\}>0$. Then,
    \begin{align*}
        & \int_{t-\varepsilon}^{t}\int_{0}^{b(s)}(-s+1)^{n}y^{m}e^{cy+\frac{1}{2}c^2s-rs}\tilde h(y)\d y \d s\\
        & \quad \geq \int_{t-\varepsilon}^{t-\frac{\varepsilon}{2}}\int_{a_1}^{a_2}(-t+\varepsilon+1)^{n}a_1^{m}e^{0+\frac{1}{2}c^2s-rs}h_2\d y \d s\\
        & \quad =  N_2 \int_{t-\varepsilon}^{t-\frac{\varepsilon}{2}}e^{\frac{1}{2}c^2s-rs}\d s \\
        & \quad= N_2 \left(e^{(\frac{c^2}{2}-r)(t-\frac{\varepsilon}{2})} - e^{(\frac{c^2}{2}-r)(t-\varepsilon)}\right)
    \end{align*}
    where $N_2>0$ is a constant.
    Putting these results together we have
    \begin{align*}
        0\leq \lim_{i\to \infty}\frac{I^{b}_{(-\infty,-\varepsilon)}(n,m)(c_i)}{I^{b}_{[-\varepsilon,0]}(n,m)(c_i)} 
        \leq \lim_{i\to \infty}\frac{N_1 e^{c_ib_\infty}\frac{1}{\frac{c_i^2}{2}-r} P(c_i,\varepsilon) e^{(\frac{c_i^2}{2}-r)(t-\varepsilon)}}{N_2 \left(e^{(\frac{c_i^2}{2}-r)(t-\frac{\varepsilon}{2})} - e^{(\frac{c_i^2}{2}-r)(t-\varepsilon)}\right)} 
        = 0 
    \end{align*}   
    which shows the first claim. \\
    Let now $b(t) \neq b'(t)$. We assume w.l.o.g.\ that $b(t) > b'(t)$. Since $b$ and $b'$ are continuous, there exists $\delta>0$ such that $b(s) > b'(s)$ for all $s\in [t-\delta,t]$.
    Then, the term
        \[D_{[t-\delta,t]}(n,m)(c_i) = \int_{t-\delta}^{t}\int_{b'(s)}^{b(s)}(-s+1)^{n}y^{m}e^{cy+\frac{1}{2}c^2s-rs}\tilde h(y)\d y \d s\]
    is positive and analogously to the calculations above we obtain 
        \[ \lim_{i\to \infty}\frac{|D_{(-\infty, t-\delta]}(n,m)(c_i)|}{D_{[t-\delta,t]}(n,m)(c_i)} 
        \leq  \lim_{i\to \infty}\frac{I^{\max(b,b')}_{(-\infty, t-\delta]}(n,m)(c_i)}{D_{[t-\delta,t]}(n,m)(c_i)} = 0.\]
    If $\varepsilon \leq \delta$ we can set $\delta = \varepsilon$ and we are done. If $\varepsilon > \delta$, we have
    \begin{align*}
         \lim_{i\to \infty}\frac{|D_{(-\infty, t-\varepsilon]}(n,m)(c_i)|}{D_{[t-\varepsilon,t]}(n,m)(c_i)} 
        \leq \lim_{i\to \infty}\frac{|D_{(-\infty, t-\varepsilon]}(n,m)(c_i)|}{D_{[t-\delta,t]}(n,m)(c_i) - I^{\max(b,b')}_{(-\infty, t-\delta]}(n,m)(c_i)}
         = 0.
    \end{align*}
    For $c$ large enough the denominator is positive, hence, it follows     
        \[\lim_{i\to \infty}\frac{D_{(-\infty, t]}(n,m)(c_i)}{D_{[t-\varepsilon,t]}(n,m)(c_i)} = 1.\]
\end{proof}
The next lemma states that if the integral on the right-hand side of \eqref{eq:main22} is the same for two different stopping boundaries, then this property is not changed if we multiply the integrand with a polynomial $q(s,y)$.
       
\begin{lemma}\label{lem:1}
    If $I^{b}(c) = I^{b'}(c)$ for all $c>\sqrt{2r}$, then
        \[I^{b}(n,m)(c) = I^{b'}(n,m)(c)\]
    for all $c>\sqrt{2r}$ and $n,m\in\nn_{0}$.
\end{lemma}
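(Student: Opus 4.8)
The plan is to prove directly that the difference $D(n,m)\equiv 0$ on $(\sqrt{2r},\infty)$ for every $n,m\in\nn_0$, starting from the single hypothesis $D(0,0)\equiv 0$. The engine is a differentiation identity in $c$. Differentiating $I^{b}(n,m)(c)$ under the integral sign (justified by a dominated convergence argument, since the integrals converge by assumption and $\partial_c$ of the kernel is locally dominated) brings down a factor $y+cs$ from $e^{cy+\frac12 c^2 s-rs}$; writing $s=1-(-s+1)$ to re-express the $s$-part through the weights $(-s+1)^n$ yields
\begin{equation*}
  \frac{\d}{\d c}I^{b}(n,m)(c)=I^{b}(n,m+1)(c)+c\,I^{b}(n,m)(c)-c\,I^{b}(n+1,m)(c),
\end{equation*}
and the same identity for $b'$, hence
\begin{equation*}
  \frac{\d}{\d c}D(n,m)(c)=D(n,m+1)(c)+c\,D(n,m)(c)-c\,D(n+1,m)(c).
\end{equation*}

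Solving for the highest index gives $D(n,m+1)=\frac{\d}{\d c}D(n,m)-c\,D(n,m)+c\,D(n+1,m)$, so the entire column $\{D(n,m+1)\}_{n\ge 0}$ is determined by the previous column $\{D(n,m)\}_{n\ge 0}$. Thus, if the whole column $m=0$ vanishes, i.e.\ $D(n,0)\equiv 0$ for all $n$, an induction on $m$ propagates the vanishing to every column and the lemma follows. This reduces everything to the base case: showing that attaching the pure time weight $(-s+1)^n$ preserves the vanishing of $D(0,0)$. The match is natural, because the large-$c$ asymptotics are controlled precisely in the $s$-direction.

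The base case $D(n,0)\equiv 0$ is where Lemma \ref{lem:lim} enters and is the main obstacle. The idea is that as $c\to\infty$ the kernel $e^{cy+\frac12 c^2 s-rs}$ concentrates in $s$ at the largest admissible time (the supremum $t_0$ of the times where $b$ and $b'$ disagree), and Lemma \ref{lem:lim} makes this quantitative: the asymptotics of $D(n,0)(c)$ are governed by an arbitrarily thin top window $[t_0-\varepsilon,t_0]$, on which $(-s+1)^n$ is to leading order the constant $(-t_0+1)^n$. Hence the leading asymptotics of $D(n,0)(c)$ are proportional to those of $D(0,0)(c)$, which are zero. The genuinely hard part is to upgrade this \emph{asymptotic} statement into the \emph{exact} identity $D(n,0)\equiv 0$ for every $c>\sqrt{2r}$. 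Two features block a naive argument: first, $c$ enters the integrand both through the $s$-exponent \emph{and} through the inner $y$-integral $\int_{0}^{b(s)}e^{cy}\tilde h\,\d y$, so $(-s+1)^n$ cannot be treated as a clean differential operator acting on a $c$-independent Laplace transform (differentiating in $c$ always reintroduces the $y$-weighted columns $m\ge 1$, which is exactly the coupling that forbids isolating a single column); second, $b-b'$ may change sign as $t\nearrow t_0$, so one cannot simply invoke the positivity of $(-s+1)^n y^m e^{cy+\frac12c^2s-rs}\tilde h$. It is precisely this sign indeterminacy that forces the sharp ratio estimates of Lemma \ref{lem:lim} in place of a crude positivity comparison, and that makes the subsequent completeness argument with Laguerre exponential polynomials (Step 3) the correct tool for turning the full family of weighted vanishings produced here into the identifiability conclusion.
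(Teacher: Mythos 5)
Your differentiation identity is correct and is, in substance, the same one the paper uses (the paper first multiplies by $e^{-c^{2}/2}$, which merely removes the term $c\,D(n,m)$ before differentiating), and your induction on $m$ is a valid reduction: if the whole column $D(n,0)\equiv 0$, $n\in\nn_{0}$, were known, the identity would propagate the vanishing to every column. The genuine gap is that you never prove this base case, and you say so yourself (``the genuinely hard part is to upgrade this asymptotic statement into the exact identity''). The way you propose to close it is moreover circular: the Laguerre-completeness argument is Step 3 of the outline, i.e.\ part of the proof of Theorem \ref{th:eind}, and it takes the conclusion of the present lemma as its input, so it cannot be invoked to establish the lemma. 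The asymptotic heuristic you offer in its place also fails as stated: Lemma \ref{lem:lim} makes $D(n,0)(c)$ comparable to $(-t_0+1)^{n}D_{[t_0-\varepsilon,t_0]}(0,0)(c)$, and this \emph{window} quantity is in general not zero --- only the full integral $D(0,0)$ vanishes, while the window piece is compensated by the tail. Being ``proportional to something identically zero'' is vacuous and gives no control on $D(n,0)$.

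The missing idea --- the crux of the paper's proof --- is an exactness-plus-analyticity contradiction that produces both new indices at once, so that no column base case is ever needed. Under the induction hypothesis $D(n,m)\equiv 0$, your identity is not asymptotic but exact for every $c>\sqrt{2r}$:
\begin{equation*}
D(n,m+1)(c) = c\,D(n+1,m)(c).
\end{equation*}
If $D(n+1,m)\not\equiv 0$, analyticity in $c$ supplies a sequence $c_i\to\infty$ with $D(n+1,m)(c_i)\neq 0$, whence $c_i = D(n,m+1)(c_i)/D(n+1,m)(c_i)$. Lemma \ref{lem:lim} then bounds this ratio: choosing $t_j\nearrow t^{\ast}:=\sup\{t\mid b(t)\neq b'(t)\}$ with $b(t_j)\neq b'(t_j)$, both numerator and denominator are asymptotically carried by the same window $[t_j-\varepsilon_j,t_j]$, on which the extra weight $y$ in the numerator costs at most $b(t_j-\varepsilon_j)$ while the extra weight $(-s+1)$ in the denominator is at least $1$; letting $j\to\infty$ and using continuity and monotonicity of $b$, the ratio stays bounded by $b(t^{\ast})<\infty$, contradicting $c_i\to\infty$. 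Hence $D(n+1,m)\equiv 0$, and then $D(n,m+1)\equiv 0$ from the displayed relation. Since each step yields the vanishing at $(n+1,m)$ and $(n,m+1)$ simultaneously, induction from $(0,0)$ alone sweeps out all of $\nn_{0}\times\nn_{0}$; in particular your troublesome column $D(n,0)$ is obtained along the way rather than assumed. This also answers the two obstacles you list: one never needs to isolate a single column or invoke positivity of the full integrand --- the sign indeterminacy is absorbed by Lemma \ref{lem:lim} (the dominant top window has a definite sign), and the coupling of the two indices is not a defect of the differentiation identity but exactly what the relation $D(n,m+1)=c\,D(n+1,m)$ exploits.
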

\begin{proof}
    We prove the claim by induction. Let $n,m=0$ then $I^{b}(n,m)(c) = I^{b'}(n,m)(c)$ by assumption. 
    Let now $n,m\neq 0$ be fixed and $I^{b}(l,k)(c) = I^{b'}(l,k)(c)$, for all $c>\sqrt{2r}$ and $l\leq n, k\leq m$.
    We multiply 
        \[I^{b}(n,m)(c) = \int_{-\infty}^{0}\int_{0}^{b(s)}(-s +1)^{n}y^{m}e^{cy+\frac{1}{2}c^2s-rs}\tilde h(y)\d y \d s\]
    with $e^{-\frac{c^2}{2}}$, take the derivative in $c$ and we have  
    \begin{align*}
        &\frac{\partial }{\partial c} \int_{-\infty}^{0}\int_{0}^{b(s)}(-s+1)^{-n}y^{m}e^{cy+\frac{1}{2}c^2(s-1)-rs}\tilde h(y)\d y \d s  \\
        & \quad =  \int_{-\infty}^{0}\int_{0}^{b(s)}\big(y+ c(s-1)\big)(-s+1)^{-n}y^{m}e^{cy+\frac{1}{2}c^2(s-1)-rs}\tilde h(y)\d y \d s\\
        & \quad =  e^{-\frac{c^2}{2}}\left(  I^{b}(n,m+1)(c) - cI^{b}(n+1,m)(c)\right).
    \end{align*}
    For $b'$ we obtain the same result and together we have
    \begin{equation}\label{eq:c}
        c\big(I^{b}(n+1,m)(c) - I^{b'}(n+1,m)(c)\big) = I^{b}(n,m+1)(c) - I^{b'}(n,m+1)(c).
    \end{equation}
    Let us assume that $I^{b}(n+1,m) \neq I^{b'}(n+1,m)$.
    $I^{b}(n+1,m)$ is analytic in $c$, hence there exists a series $(c_i)$ with $c_i\to \infty$ such that
    \[I^{b}(n+1,m)(c_i) \neq I^{b'}(n+1,m)(c_i)\]
    for all $i\in\nn$. We can solve \eqref{eq:c} for $c_i$ and find
    \begin{equation}\label{eq:ci}
        c_i = \frac{I^{b}(n,m+1)(c_i) - I^{b'}(n,m+1)(c_i)}{I^{b}(n+1,m)(c_i) - I^{b'}(n+1,m)(c_i)}
        = \frac{D(n,m+1)(c_i)}{ D(n+1,m)(c_i)}.
    \end{equation}
    We show that the right-hand side is bounded, in contradiction to the assumption $c_i\to\infty$.\\
    Let $t^{\ast} = \sup\{t\mid b(t) \neq b'(t)\}$. If the set is empty, there is nothing to show. Let $(t_j)_{j\in\nn}$ be a non-decreasing sequence with $b(t_j) \neq b'(t_j)$ and $\lim_{j\to \infty} t_j = t^{\ast}$.
    
    \begin{figure}[htbp]
        \centering
        \includegraphics[width=0.7\textwidth]{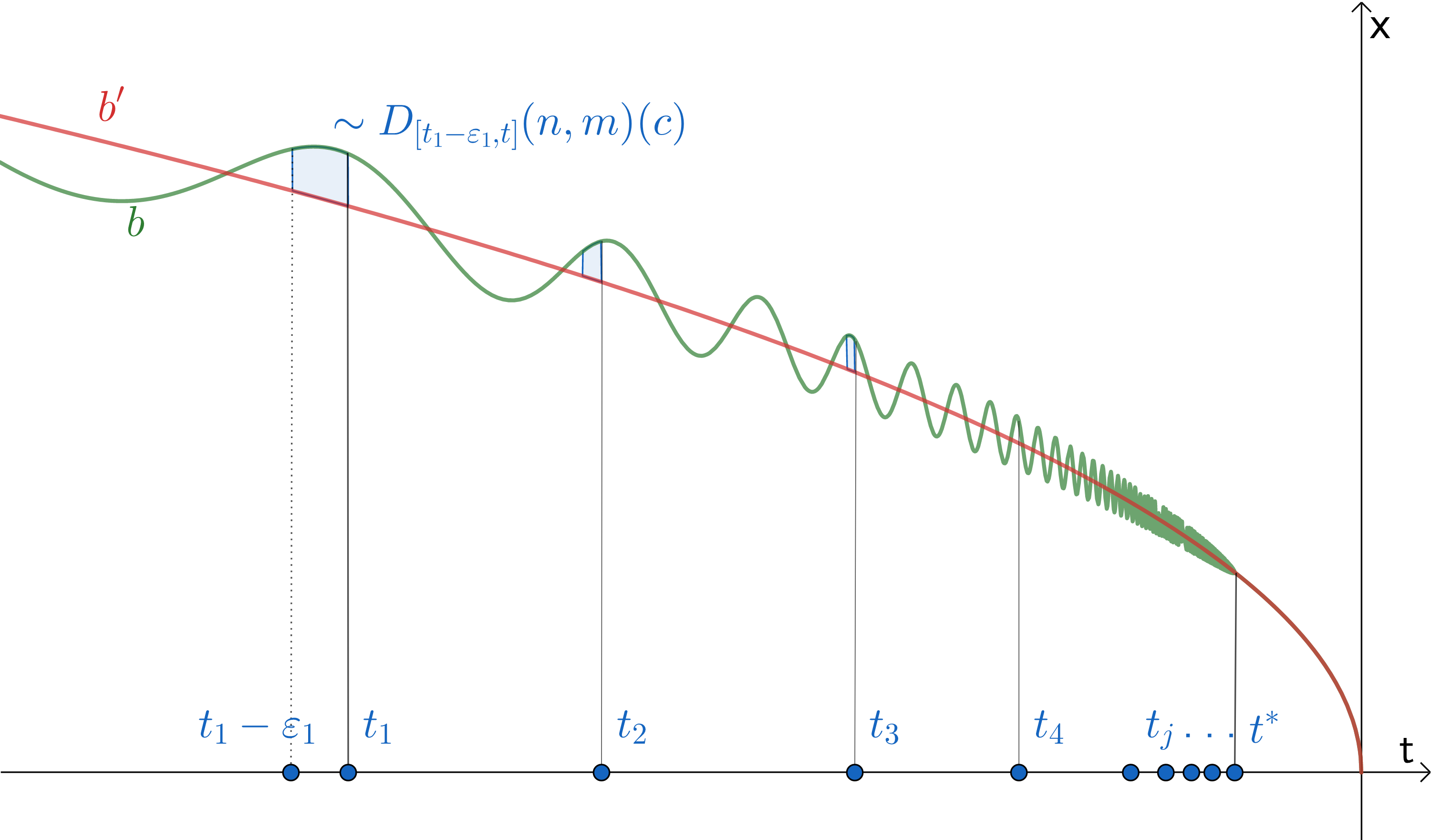}
        \caption{Visualization of the setting in Lemma \ref{lem:1}}
        \label{fig:tj}
    \end{figure}

   For every $t_j$ we find $t_{j+1}-t_j>\varepsilon_j>0$ such that $b(s) \neq b'(s)$ for all $s\in [t_j-\varepsilon_j,t_j]$. We use Lemma \ref{lem:lim} and the fact that $b$ is decreasing to obtain
    \begin{align*}
        \lim_{i\to\infty}\frac{D_{(-\infty,t_j]}(n,m+1)(c_i)}{ D_{(-\infty,t_j]}(n+1,m)(c_i)}
       & = \lim_{i\to\infty}\frac{D_{[t_j-\varepsilon_j,t_j]}(n,m+1)(c_i)}{ D_{[t_j-\varepsilon_j,t_j]}(n+1,m)(c_i)} \\
       &   \leq \lim_{i\to\infty}\frac{b(t_j-\varepsilon_j) D_{[t_j-\varepsilon_j,t_j]}(n,m)(c_i)}{ (t_j + 1) D_{[t_j-\varepsilon_j,t_j]}(n,m)(c_i)}\\    
       & = \lim_{i\to\infty}\frac{b(t_j-\varepsilon_j) }{ (t_j + 1)}\\
       & \leq b(t_j-\varepsilon_j).
    \end{align*}
    The sequence $(b(t_j-\varepsilon_j))_{j\in\nn}$ is decreasing and since $b$ is continuous we have
    \[\lim_{j\to\infty}b(t_j-\varepsilon_j) = b(t^{\ast})<\infty.\]
    By dominated convergence we have
      \[\lim_{j\to \infty}\lim_{i\to \infty}\frac{D_{(-\infty,t_j]}(n,m+1)(c_i)}{ D_{(-\infty,t_j]}(n+1,m)(c_i)}  
        =  \lim_{i\to \infty}\frac{D_{(-\infty,t^\ast]}(n,m+1)(c_i)}{ D_{(-\infty,t^{\ast}]}(n+1,m)(c_i)} 
        <  \infty.\]
    For all $t > t^{\ast}$ we have $b(t) = b'(t)$, hence
        \[\lim_{i\to\infty} \frac{D(n,m+1)(c_i)}{ D(n+1,m)(c_i)} <\infty,\]
    in contradiction to \eqref{eq:ci}. It follows 
    $I^{b}(n+1,m) = I^{b'}(n+1,m)$. Plugging this into \eqref{eq:c} we see that also $I^{b}(n,m+1) = I^{b'}(n,m+1)$.
    
\end{proof}

\noindent We can now state our main theorem on the uniqueness of $b$.

\begin{theorem}\label{th:eind}
    Given a stopping problem that fulfills Assumptions \ref{as:1}. If there is $N\in\rr$ such that the Fredholm representation
    \eqref{eq:main} holds for all $c > N$, then $b$ is determined  uniquely by \eqref{eq:main} in the class of continuous monotone functions.
\end{theorem}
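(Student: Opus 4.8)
The plan is to prove uniqueness by showing that any two continuous monotone boundaries $b,b'$ that both satisfy the representation \eqref{eq:main} must coincide, exploiting the Gaussian–mixture structure noted in the remark above together with the moment-type identities of Lemma \ref{lem:1}. First I would reduce everything to a single family of orthogonality relations. If $b$ and $b'$ are both continuous monotone solutions, then $I^{b}(c)=I^{b'}(c)$ for all $c>N$, and since both sides of \eqref{eq:main22} are real-analytic in $c$ on $(\sqrt{2r},\infty)$, the identity theorem extends this to the whole ray $c>\sqrt{2r}$. Lemma \ref{lem:1} then yields $D(n,m)(c)=0$ for every $n,m\in\nn_{0}$ and every $c>\sqrt{2r}$. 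Because the polynomials $(1-s)^{n}$ span all polynomials in $s$ and the $y^{m}$ span all polynomials in $y$, this says precisely that, for each fixed $c>\sqrt{2r}$, the signed density
\[
\Delta(s,y):=e^{cy+\frac{1}{2}c^{2}s-rs}\,\tilde h(y)\,\big(\ind_{\{y<b(s)\}}-\ind_{\{y<b'(s)\}}\big)
\]
is orthogonal to every polynomial on $(-\infty,0]\times[0,b_\infty]$.

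Next I would turn this orthogonality into the vanishing of $\Delta$ by a completeness argument, which is the analytic heart of the theorem. Substituting $u=-s\geq 0$ maps the weight $e^{\frac{1}{2}c^{2}s}=e^{-\frac{1}{2}c^{2}u}$ into an exponential weight on the half-line, which is exactly the weight for which the Laguerre functions $\ell_n(u)=L_n(u)e^{-u/2}$ form a complete orthonormal system of $L^2(0,\infty)$; since $y$ ranges over the bounded interval $[0,b_\infty]$, ordinary (orthonormalized) polynomials $p_m$ are complete in $L^2(0,b_\infty)$ by Weierstrass, so the products $\ell_n\otimes p_m$ form a complete orthonormal system of $L^2\big((0,\infty)\times(0,b_\infty)\big)$. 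The key observation is that testing the $L^2$-function $\Phi:=e^{u/2}\Delta$ against $\ell_n\otimes p_m$ collapses, after the $e^{-u/2}$ is absorbed, to $\int\!\int \Delta(u,y)\,L_n(u)p_m(y)\,dy\,du$, which vanishes because $L_n(u)p_m(y)$ is a polynomial and $\Delta$ is orthogonal to all polynomials by the previous step. Hence $\Phi$ is orthogonal to a complete system and therefore $\Phi=0$, so $\Delta=0$ almost everywhere.

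To make the $L^2$-membership rigorous I would use the freedom to choose $c$ large: the $u$-dependence of $\Phi=e^{u/2}\Delta$ is governed by $e^{(r-\frac{1}{2}c^{2}+\frac{1}{2})u}$, which is square-integrable as soon as $c>\sqrt{2r+1}$, and $\tilde h$ and the indicator difference are bounded on the compact $y$-range, so $\Phi\in L^2$ for such $c$; this is permitted since the orthogonality holds for all $c>\sqrt{2r}$. Once $\Delta=0$ a.e., the assumption $\lambda(\{\tilde h=0\})=0$ from Assumptions \ref{as:1} forces $\ind_{\{y<b(s)\}}=\ind_{\{y<b'(s)\}}$ for a.e.\ $(s,y)$, i.e.\ $b=b'$ for a.e.\ $s$, and continuity of $b$ and $b'$ upgrades this to $b\equiv b'$.

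The main obstacle is the completeness step, and this is exactly where the non-compactness that Bruni and Koch flagged as essential would block a direct transfer of their identifiability result: in their setting the parameter domain is compact and the mean and variance are bounded and bounded away from $0$, whereas here the time parameter $u=-s$ is unbounded and the Gaussian mean $y/(-s)$ and variance $1/(-s)$ are unbounded as well. My reformulation trades their compactness for decay, using the ability to take $c$ arbitrarily large to manufacture the exponential decay in $u$ that places $\Phi$ in $L^2(0,\infty)$, after which the classical completeness of the Laguerre functions does the work that compact-domain polynomial density did for them. The remaining checks — the analytic-continuation argument extending $c>N$ to $c>\sqrt{2r}$, and the boundedness estimates guaranteeing $\Phi\in L^2$ — are routine but must be carried out carefully.
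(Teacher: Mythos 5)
Your proposal is correct and follows essentially the same route as the paper's proof: Lemma \ref{lem:1} supplies the polynomial (moment) identities, and completeness of a Laguerre-type system on the half-line, tensored with ordinary polynomials on the bounded $y$-range, forces the two densities (and hence $b$ and $b'$, using $\lambda(\{\tilde h=0\})=0$ and continuity) to coincide; your orthonormal Laguerre functions $L_n(u)e^{-u/2}$ are just the basis form of the Laguerre exponential polynomials whose density in $L^{2}(\rr_{\leq 0})$ the paper cites from \cite{ang72}. The only substantive addition is your explicit identity-theorem step extending the hypothesis from $c>N$ to all $c>\sqrt{2r}$, a point the paper's proof passes over silently.
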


\begin{proof}
    Let $I^{b}(c) = I^{b'}(c)$ for all $c>\sqrt{2r}$. By Lemma \ref{lem:1} we have $I^{b}(n,m)(c) = I^{b'}(n,m)(c)$ for all $c>\sqrt{2r}$ and $n,m\in \rr$. We rewrite $I^{b}(n,m)(c)$ as
        \[I^{b}(n,m)(c) = \int e^{s}(-s +1)^{n}y^{m} \mu(ds,dy)
        \]
    with a measure $\mu$ with
        $\frac{\partial \mu}{\partial \lambda} =f$
    where $f$ denotes the density function
        \[f(s,y) := \ind_{\{0\leq y\leq b(t), t\leq 0\}} e^{-s}e^{cy+\frac{1}{2}c^2s-rs}\tilde h(y)\]
    and $\lambda$  denotes the Lebesgue measure on $\rr^{2}$. A measure $\mu'$ with density function $f'$ is defined analogously via $b'$.\\
    Functions of the form $p(-s+1)e^{s}$ for a polynomial $p$ are called \textit{Laguerre exponential polynomials} and they lie dense in $L^{2}(\rr_{\leq 0})$, see \cite[Lemma 1. (ii)]{ang72}. For  $B:=\rr_{\leq0}\times [0,M]$, the family $\{q(-s,y)e^{s}|q \text{ is a polynomial on } B\}$ lies dense in $L^{2}(B)$.
    By Lemma \ref{lem:1} and linearity of the integral we have
        \[\int e^{s}q(-s,y) f(s,y) \d(s,y) = \int e^{s}q(-s,y) f'(s,y) \d(s,y)\]
    for all polynomials $q$. Let now $g \in L^{2}(B)$ and $q_n$ be a sequence of polynomials with $e^{s}q_n \overset{L^2} \to g$. For $c$ large enough, $f$ and $f'$ are positive, bounded and in $L^{2}$. It follows that
        \[\lim_{n\to\infty}\int e^{s}q_n(-s,y) f(s,y) \d (s,y)
        = \int g(s,y) f(s,y) \d(s,y) \]
    and
        \[\lim_{n\to\infty}\int e^{s}q_n(-s,y) f'(s,y) \d(s,y)
        =\int g(s,y) f'(s,y) \d(s,y),\]
    hence,
        \[\int g(s,y) \mu(ds,dy) = \int g(s,y) \mu'(ds,dy)\]
    for all $g \in L^{2}(B)$.\\
    If follows that $\mu = \mu'$ -a.e. Since $b$ and $b'$ are continuous and $\lambda(\{\tilde h(y) = 0 \}) = 0 $ we have $b = b'$.
\end{proof}

\begin{remark}
    It is not straightforward to extend the proof immediately to the general setting \eqref{eq:main}, but some assumptions are easy to relax.
    \begin{itemize}
        \item We used that $b$ is bounded in the discounted case. For the proof it is enough if $b$ grows at most linearly for $t\to -\infty$. That would then include cases like Example \ref{ex:stadje}.
        \item We used monotonicity of $b$ for Lemma \ref{lem:1}. It would be enough to assume that $b$ is bounded on compact intervals, e.g., $b$ continuity would be sufficient. 
        \item The assumption $\lambda(\{\tilde h(y) = 0 \}) = 0 $ is not necessary. If we do not assume this we would get uniqueness up to sets $H$ with $\lambda(\{y\in H\mid \tilde h(y)\neq 0\})=0$.
        \item The assumption that \eqref{eq:main} holds for all $c>N$ can be relaxed to \eqref{eq:main} holds for $(c_i)=(c_i)_{i\in\nn}$ where the set of limit points of $(c_i)$ is not bounded.
    \end{itemize}
\end{remark}


\section{Numerics}\label{sec:5}
We show how the Fredholm representation can be used to numerically  approximate the optimal stopping boundary $\partial C$. We stick to the one-dimensional and one-sided case where we approximate the function $d$.
We discretize \eqref{eq:main} -- in $y$ and in $c$ --
and use some optimization algorithm to minimize
\begin{equation}\label{eq:num1}
   f(d) = \sum_{l=1}^{M} F_{c_l}\left(\int_{-\infty}^{0} e^{c_l y}\tilde h(y)\d y +  \sum_{n=1}^{N-1}e^{(\frac{c^2}{2}-r)d_n} \int_{y_n}^{y_{n+1}} e^{y}\tilde h(y)\d y\right)
\end{equation}
numerically.
Here $M$ and $N$ are constants and $F_{c_l}$ are convex functions.
%
%

\paragraph{Upper and lower bounds}
For the numeric minimization to be stable it is very helpful to have suitable upper and lower bounds for $d$. Clearly, $d^{u}_{1}:=0$ is an upper bound, hence 
    \[\int_{-\infty}^{b_{\infty}}e^{(\frac{c^2}{2}-r)d^{u}_{1}(y)}  e^{cy}\tilde h(y)\d y \geq 0\]
for all $c>\sqrt{2r}$. We define a lower bound for $x>0$ via
    \[d^{l}_{1}(x): = \inf\left\{t\mid \int_{-\infty}^{b_{\infty}}e^{(\frac{c^2}{2}-r)\tilde d^{u}_{1}(y,x)}  e^{cy}\tilde h(y)\d y \geq 0,~\forall c>\sqrt{2r}\right\}\]
where 
    $\tilde d^{u}_{1}(y,x): = t 1_{[x,b_{\infty}]}(y)\wedge d^{u}_{1}(y).$
Since $d^{l}_{1}$ is a lower bound we have
    \[\int_{-\infty}^{b_{\infty}}e^{(\frac{c^2}{2}-r)d^{l}_{1}(y)}  e^{cy}\tilde h(y)\d y \leq 0\]
for all $c>\sqrt{2r}$. We can now find a better upper bound via
    \[d^{u}_{2}(x): = \sup\left\{t\mid \int_{-\infty}^{b_{\infty}}e^{(\frac{c^2}{2}-r)\tilde d^{l}_{1}(y,x)}  e^{cy}\tilde h(y)\d y \leq 0,~\forall c>\sqrt{2r}\right\}\]
where 
    $\tilde d^{l}_{1}(y,x): = t 1_{[0,x]}(y)\vee d^{l}_{1}(y)$.
Repeating this procedure improves the bounds slightly. They, however, do not converge to $d$.

\begin{example}\label{ex:num}
    Let $h(x) = x$ and $r=1$. We have $\tilde h(x) = x$,
    $b_0 = 0$, $b_\infty = \sqrt{\frac{1}{2}}$ and
        $\int_{-\infty}^{0} e^{cy}\tilde h(y)\d y = \frac{1}{c^2}$.
    We set 
        $F_c(x) = \left(c^2 x +\frac{1}{1+c^2 x}\right)^{2},$
    $N=60$, $M=40$, $y_k =  \frac{b_\infty(k-1)}{N-1}$, and $c_l = \sqrt{2r} + \frac{l}{10}$. A plot of the resulting stopping boundary is given in Figure \ref{fig:d2}. The plot nicely reflects the boundary behavior described in Section \ref{sec:3}. The function $-B_1x^2 \approx -2.4503 x^2$ is plotted as a reference.
    \begin{figure}[htbp]
        \centering
        \includegraphics[width=0.7\textwidth]{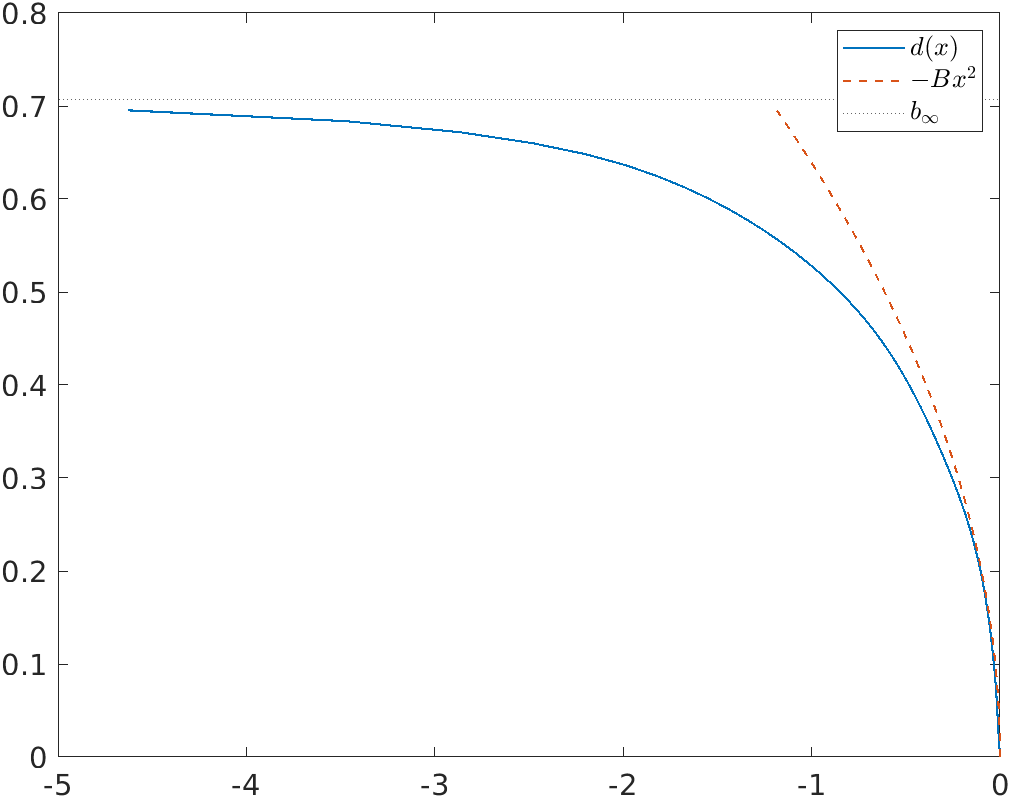}
        \caption{The stopping boundary from Example \ref{ex:num}. As a reference the \emph{limit} function $-B_1x^2$ and $b_{\infty}$ are plotted as well.}
        \label{fig:d2}
    \end{figure}
\end{example}

\section{Conclusion}\label{sec:con}
We believe that the Fredholm representation presented here is a useful tool for analyzing optimal stopping problems with finite time horizon. We have seen that in some cases the continuation set of a stopping problem is fully characterized by the representation. We are confident that uniqueness holds in a more general setting as well, but this still has to be proven. 

A comparison of the integral equation presented here with the otherwise mostly used one is not quite easy. The latter has proved to be extremely fruitful and universally applicable in the last decades. We see the equation presented here rather as a supplement. In the analysis of new problems, it can be used in addition to the Volterra equation. 

Section \ref{sec:3} illustrates that very general asymptotic results can be obtained on the basis of the Fredholm integral equation. A comparison with existing asymptotic results seems to show that our analysis is more straightforward and at the same time can be made mathematically precise with manageable effort. The Fredholm representation also seems to be useful for numeric evaluation of stopping problems. One advantage over the standard integral representation is that the kernel has no singularities what makes numeric integration more stable.

\appendix
\section{A proof}\label{sec:appendix}
\begin{proof}[Proof of interchangeability of limit and integral in the proof of Theorem \ref{th:martin}]~\\
    We want to use the dominated convergence theorem. 
    To do so we split the continuation set $C$ into two parts. On one part
    the integrand in \eqref{eq:lim1} is dominated by the limit function $e^{\textbf{c}\cdot \textbf{y} + \frac{\norm{\textbf{c}}^2}{2}s}e^{-rs} \tilde h (\textbf{y})$. The other part we denote by $A_t$ and we construct an integrable upper bound in the following.
%
    Let $\textbf{c} \in \rr^{n}$ such that $\{(t,-t\textbf{c})\mid t\leq 0\}\cap C$ is bounded and the integral
        \[0 = \int_{-\infty}^{0}\int_{C_s}e^{\textbf{c}\cdot \textbf{y} + \frac{\norm{\textbf{c}}^2}{2}s}e^{-rs} \tilde h (\textbf{y})\d \textbf{y} \d s\]
    exists.
    We define 
    \begin{equation}\label{eq:majconv}
        A_t: = \left\{(s,\textbf{y})\mid t<s<0, \, \left(\frac{-t}{s-t}\right)^{\frac{n}{2}} e^{\frac{2\textbf{c}\cdot \textbf{y} t^2 + t\norm{\textbf{y}}^2 +\norm{\textbf{c}}^2t^2s }{2(t^2-st)} }
        > e^{\textbf{c}\cdot \textbf{y} + \frac{\norm{\textbf{c}}^2}{2}s}\right\}.
    \end{equation}
    We rearrange the defining inequality in \eqref{eq:majconv} to
    \begin{align*}
        \left(\frac{-t}{s-t}\right)^{\frac{n}{2}}e^{-\frac{\norm{\textbf{c}s+\textbf{y} }^2}{2(s-t)} }& >1\mbox{, i.e.,} \quad \norm{\textbf{c}s+\textbf{y} }
        <\sqrt{n (t-s)\ln\left(1-\frac{s}{t}\right)},
    \end{align*}
    which converges to
        $\norm{\textbf{c}s+\textbf{y}} <\sqrt{n(-s)}$
    as $t\to -\infty$. Since the logarithm is concave, we have $A_t\subset A_u$ for $u<t$, and hence
        \[A_{\infty}:=\bigcup_{t<0}A_t = \left\{ (s,\textbf{y})\mid \norm{\textbf{c}s+\textbf{y} }
    <\sqrt{n(-s)}\right\}.\]
    Let $C_{\infty}$ be the continuation set of the infinite time horizon problem. We have $C_t\subset C_{\infty}$ for all $t\leq 0$. Now  $\{(t,-t\textbf{c})\mid t\leq 0\}\cap C$ bounded implies that $A_\infty\cap C$ is bounded.
    For $t$ small enough the functions 
        \[(s,\textbf{y})\mapsto \left(\frac{-t}{s-t}\right)^{\frac{n}{2}}e^{-\frac{\norm{\textbf{c}s+\textbf{y} }^2}{2(s-t)} }\]
    are all bounded on $A_{\infty}$ by some $M<\infty$.
    So, on $C$ we have that
        \[M\left| e^{\textbf{c}\cdot \textbf{y} + \frac{\norm{\textbf{c}}^2}{2}s}(-\g g)(s,\textbf{y}) \right|\]
    is an integrable upper bound for 
    \[\left|\left(\frac{-t}{s-t}\right)^{\frac{n}{2}} e^{\frac{2\textbf{c}\cdot \textbf{y} t^2 + t\norm{\textbf{y}}^2 +\norm{\textbf{c}}^2t^2s }{2(t^2-st)} }(-\g g)(s,\textbf{y}) \right|.\] 
    The result follows by the dominated convergence theorem.
\end{proof}

\bibliographystyle{alpha}
\bibliography{a}

\end{document}